\newtheorem{theorem}{Theorem}[section]
\newtheorem{cor}[theorem]{Corollary}
\newtheorem{property}[theorem]{Property}
\newtheorem{proposition}[theorem]{Proposition}
\newtheorem{lemma}[theorem]{Lemma}
\newtheorem{definition}[theorem]{Definition}
\newtheorem{defprop}[theorem]{Definition and Proposition}
\newtheorem*{decotheorem}{Decoration Theorem}
\newcommand{\C}{\mathbb{C}}
\newcommand{\JJ}{\mathcal J}
\newcommand{\LL}{\mathcal L}
\newcommand{\MM}{\mathcal M}
\newcommand{\MMp}{\mathcal M'}
\renewcommand{\AA}{\mathcal A}
\newcommand{\RR}{\mathcal R}
\newcommand{\VV}{\mathcal V}
\newcommand{\YYp}{{\mathcal Y'}}
\newcommand{\XX}{\mathcal X}
\newcommand{\YY}{\mathcal Y}
\newcommand{\ZZ}{\mathcal Z}
\newcommand{\eps}{\varepsilon}
\newcommand{\sm}{\setminus}
\newcommand{\ovl}{\overline}
\newcommand{\intr}{\operatorname{int}}
\begin{document}

\title[The Decoration Theorem]{The Decoration Theorem for Mandelbrot and Multibrot Sets \\ }
\author{Dzmitry Dudko}
\date{February 25, 2010}
\maketitle

\begin{abstract}
We prove the decoration theorem for the Mandelbrot set (and Multibrot sets) which says that
 when a ``little Mandelbrot set'' is removed from the Mandelbrot set, then most of the
  resulting connected components have small diameters.
\end{abstract}

\section{Introduction}
 The Mandelbrot set $\MM$ is defined as the set of
quadratic polynomials $z^2+c$  with connected Julia sets. It is a
compact, connected, and full set, and in addition, it has a rich
combinatorial structure. Furthermore, the Mandelbrot set is
self-similar in a certain sense: there are infinitely many subsets
that, together with the induced combinatorics, are canonically
homeomorphic to $\MM$; these are called \emph{small copies of $\MM$
within $\MM$}.

There is a classification of all small copies of the Mandelbrot set;
every copy has particular dynamical properties, hence all copies can
be defined and distinguished in the dynamical planes (see
Definition~\ref*{defMM}).

   In this paper, we prove the following theorem which was conjectured in the
mid-1990 by Mikhail Lyubich and Dierk Schleicher, as well as by
Carsten Lunde Petersen:

\begin{decotheorem}
\label{MainConj} Let $\MM$ be the Mandelbrot set and let $\MM_{s}$
be a small copy of the Mandelbrot set. Then for any $\varepsilon >
0$, there are at most finitely many connected components of $\MM
\backslash \MM_{s}$ with diameter at least $\eps$.
\end{decotheorem}
The countably many components of $\MM\sm\MM_s$ are called the
decorations of $\MM_s$, and the claim is that most of them are
small.

Our main tool will be puzzle and parapuzzle theory.

\medskip

\noindent\textbf{Remark.} The entire construction and the proof will
be carried out for the Mandelbrot set, but they work just the same
for all Multibrot sets
\[
\MM_d:=\{c\in\C\colon \text{the Julia set of $z\mapsto z^d+c$ is connected}\}
\]
for $d\ge 2$. We refrained from working out the details for simplicity of notation.
More precisely, everything we are doing uses conformal pull-backs of a single annulus (in two different cases);
 we do not encounter problems where the combinatorics grows more slowly than
 the shrinking of moduli for high-degree pull-backs. However, we do use the result
 that all parameters $c\in\MM_d$ that are not infinitely renormalizable have trivial
 fibers (so that $\MM_d$ is locally connected at these parameters). This was proved by Yoccoz for
 $d=2$ and by Kahn and Lyubich for all $d\ge 2$.

The Decoration Theorem thus holds for all degree $d\ge2$.

\medskip

\noindent\textbf{Remark.} The entire construction is local in the
sense that it not only works for the Mandelbrot set, but also for
full families of quadratic-like maps (and similarly for
Multibrot-like maps). The details are quite similar to the text as
written and are omitted.

\subsection{Terminology and Notation:}

$f_{c}(z)=z^{2}+c$ is a quadratic polynomial.

The level of equipotentials will be called \textit{height}, and the
\textit{depth} of a puzzle piece is the number of iterations it
takes to map the puzzle piece to a piece of the initial puzzle.

For every puzzle piece the upper index is its depth. If a puzzle
piece is ``unique'', then the subindex will be $0$ or $1$ depending
on the context. For example, $Y^n_0$ contains the critical point
while $Y^n_1$ contains the critical value.

  \textit{We will use the following conventions}: objects in the parameter
plane will be denoted by calligraphic capital letters (such as
$\MM$, $\ZZ^n_i$) while those in dynamical planes will be denoted by
Roman capitals (such as $Z'^m_j$, $Y^p_1$).

  We will use floor brackets (for example,
$\lfloor Y^p_1\rfloor$ or $\lfloor\ZZ'^m_j\rfloor$) to untruncate
the corresponding puzzle or parapuzzle piece.

Following tradition, we slightly abuse (and thus simplify) notation
and use the modulus of an annulus $A$ even when $A$ is not open,
provided its boundary is piecewise smooth (which will always be the
case for annuli constructed by puzzle pieces).

By ``combinatorics'' we mean the angles and heights of rays and
equipotentials. In particular, two puzzle pieces in different planes
are (combinatorially) the same if there is a homeomorphism of their
boundaries sending rays and equipotentials to rays and
equipotentials with equal angles and heights.

In the paper all renormalizations are simple; we will not consider
crossed renormalizations.

\subsection{Outline of the paper.}
  In Section~\ref{sec1} we will briefly review the puzzle (and
  parapuzzle)
  construction. We also will fix some conventions.

     Section~\ref{CombinSect} contains the combinatorics. We will
     discuss the relation between small copies of the Mandelbrot set and their decorations, as well as with puzzle
     pieces in the dynamical planes.

     First we reformulate
  the problem in terms of puzzle and parapuzzle pieces. Every
  decoration is inside a parapuzzle piece $\ZZ^n_i$ associated to that decoration.

     If the Decoration Theorem was not true, then
  big decorations must accumulate at some point $c_0$ from the Mandelbrot
  set. The aim is to show that for every $\ZZ^n_i$ sufficiently close to $c_0$
  there exists an annulus $\AA^n_i$ such that the following
  properties hold:
\begin{itemize}
\item $\AA^n_i$ surrounds $\ZZ^n_i$, but neither contains nor surrounds $c_0$;

\item the moduli of $\AA^n_i$ are uniformly bounded below.
\end{itemize}
  This will conclude the proof.

The first observation is that $c_0$ must be so that the fiber of
$\MM$ at $c_0$ is not trivial. Therefore, by Yoccoz's results it is
enough to consider the case when $z^2+c_0$ is an infinitely
renormalizable polynomial. Hence $c_0\in \MM'_s\subsetneq\MM_s$,
where $\MM'_s$ is a small copy of $\MM$ within $\MM_s$.

  Every $\ZZ^n_i$ is inside some decoration's
  parapuzzle piece $\ZZ'^m_j$ associated with $\MM'_s$.
  We will show that
  $\AA^n_i:=\ZZ'^m_j\backslash \ZZ^n_i$ are annuli that satisfy the
above requirements%
\footnote{Actually, we will denote by $\AA^n_i$ subannuli of
$Z'^m_j\backslash \ZZ^n_i$ and prove that these satisfy the
conditions stated above; this also would imply that
$\ZZ'^m_j\backslash \ZZ^n_i$ satisfy the above conditions.}.

  By Lemma~\ref{lem1} every $\ZZ^n_i$ corresponds to
   dynamical puzzle pieces $Z^n_i$.
   All puzzle pieces $Z^n_i$ are preimages of a single puzzle piece
 $Z^0_0$, which itself is inside a puzzle piece $\widehat{Z^0_0}$ (Proposition~\ref{defprop1}).

In Section~\ref{sec2.3} we will consider a particular case that we
call ``simple''. We pull back conformally the annulus
$\widehat{Z^0_0}\setminus Z^0_0$ to the annulus
$\widehat{Z^n_i}\setminus Z^n_i$ around $Z^n_i$. For
$\widehat{Z^n_i}\setminus Z^n_i$ there exists the corresponding
annulus $\widehat{\ZZ^n_i}\setminus \ZZ^n_i$ in the parameter plane
with comparable modulus.

    Lemma~\ref{lem4} (Section~\ref{sec2.4}) shows the existence of a
    big collection of annuli with  bounded below modulus. Pulling
    back
    conformally these annuli we obtain a collection of annuli $\widetilde{A}$ within
    $Z'^m_j$, where every annulus has a corresponding annulus in parameter space.
     If the decoration is ``unsimple'' and sufficiently close to $c_0$ (Section~\ref{Sub:Unsimple}), then $Z^n_i$ at $c=c_0$ is
    surrounded by an annulus $A^n_i\in \widetilde{A}$.

\medskip\noindent\textbf{Acknowledgements.} I am very grateful to
 Carsten Lunde Petersen, Mikhail Lyubich, Pascale
Roesch, Davoud Cheraghi and ``Bremen dynamical group'', in
particular Dierk Schleicher, Vladlen Timorin, Nikita Selinger,
Yauhen Mikulich for very useful discussions.

I am very grateful to Dierk Schleicher for his invaluable assistance
in writing this paper.

\newpage

\section{Puzzle and parapuzzle pieces}
\label{sec1}

Let $\ovl R$ be a finite collection of periodic and preperiodic
rays such that:
\begin{itemize}
\item every ray lands;

\item every landing point is the landing point of at least two
rays in $\ovl R$;

\item $\ovl{R}$ is forward invariant: $f_c(R)$ is in $\ovl
R$ for every $R\in \ovl R$.
\end{itemize}

The family $\ovl R$ together with an equipotential gives a
partition of a neighborhood of the Julia set (we assume that all sets in
this partition are closed; their boundaries may intersect).
Any bounded
component $X^0_i$ of the partition is called a puzzle piece of depth
$0$. We say that $X^n_i$ is \textit{a puzzle piece of depth $n$} if
it is a preimage of a puzzle piece of depth $0$ under $f^n_c$.

Denote by $F_{\ovl R}$ the family of all puzzle pieces
associated with $\ovl R$ (of all depths).
\begin{property}
All puzzle pieces from $F_{\ovl R}$ are closed topological discs. If
$X^n_i, X^m_j\in F_{\ovl R}$, then either their interiors do not
intersect or $X^n_i\subseteq X^m_j$; the latter can happen only if
$n\ge m$.
\end{property}

   The following criterion is useful in order
   to determine when a topological disc is a puzzle piece (in an
appropriate family).

\begin{proposition}
\label{prop0.1}
   Suppose a closed topological disc $X$ is bounded by periodic and preperiodic
   rays and truncated by an equipotential. Then there exists a family
$F$ so that $X$ is a puzzle piece for $F$
   if and only if the forward orbit of $\partial X$ does not
   intersect the interior of $X$.
\end{proposition}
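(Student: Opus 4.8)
The plan is to prove the two directions separately. For the ``only if'' direction, suppose $X$ is a puzzle piece of some depth $n$ in a family $F = F_{\ovl R}$, so $f_c^n$ maps $X$ homeomorphically onto a depth-$0$ piece $X^0_i$ (on interiors; more precisely $f_c^n|_{\intr X}$ is a homeomorphism onto $\intr X^0_i$). The boundary $\partial X$ consists of arcs of rays in $f_c^{-n}(\ovl R)$ and arcs of the level-$2^{-n}$ equipotential. Using Property~\ref{...} (the nesting property of $F_{\ovl R}$): any iterate $f_c^k(\partial X)$ lies in the union of rays of $\ovl R$ and equipotentials, hence lies in the boundaries of puzzle pieces at the appropriate depth, so it cannot meet $\intr X^0_i$ after $n$ steps, and at intermediate steps it lies on the grid of lower-depth pieces. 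The key point is that $\intr X$ is one of the nested pieces and $\partial X$ is disjoint from all piece-interiors of the same or lower depth; pushing this forward and using that $f_c$ maps the ray/equipotential grid into itself gives $f_c^k(\partial X)\cap \intr X = \emptyset$ for all $k\ge 1$.

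For the ``if'' direction — which I expect to be the main work — assume $X$ is a closed topological disc bounded by periodic and preperiodic rays and a piece of an equipotential of some height $h$, and assume $\bigcup_{k\ge 1} f_c^k(\partial X)$ misses $\intr X$. The goal is to manufacture a finite forward-invariant ray collection $\ovl R$ (landing, each landing point hit by $\ge 2$ rays) and an equipotential so that $X$ becomes a puzzle piece. First I would take $\ovl R_0$ to be the set of all rays occurring in $\partial X$, together with all their forward iterates; since the rays bounding $X$ are (pre)periodic this is a finite set, it is forward invariant by construction, and every ray in it lands. To guarantee the ``at least two rays per landing point'' condition I would, at each landing point of a ray in $\ovl R_0$, adjoin the finitely many other rays landing there (which, at (pre)periodic points, is again a finite forward-invariant augmentation, or one uses that co-landing rays are already forced by the dynamics). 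Pair this with the equipotential of height $\max(h, \text{height needed so all these rays are truncated consistently})$, pulled back under $f_c$ the requisite number of times. This produces a legitimate family $F_{\ovl R}$.

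It then remains to check that $X$ is actually one of the pieces of $F_{\ovl R}$, i.e.\ that $f_c^n(X)$ is a depth-$0$ piece for the appropriate $n$. Here is where the hypothesis ``$f_c^k(\partial X)\cap\intr X=\emptyset$'' does its real job: it ensures that when we pull the depth-$0$ partition back, the component containing (the image of) $X$ does not get subdivided by rays or equipotentials that cut through $\intr X$ — such a cut would correspond exactly to some $f_c^k(\partial X)$ entering $\intr X$. One argues by induction on depth that $X$ equals the connected component of the depth-$n$ partition that it lies in. The main obstacle, as I see it, is the bookkeeping in the construction of $\ovl R$: one must be careful that adding co-landing rays and taking forward orbits keeps the collection finite and invariant, and that the chosen equipotential height is compatible with all of $\partial X$ (in particular that $\partial X$'s equipotential arc sits at a level that is an iterated preimage of the level-$0$ equipotential). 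Once the family is in hand, the verification that $X$ is a piece is essentially forced by the disjointness hypothesis.
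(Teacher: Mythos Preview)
Your approach is essentially the paper's: take $\ovl R$ to be the forward orbit of the rays $R_1,\dots,R_s$ bounding $X$, and observe that the disjointness hypothesis guarantees $\intr X$ is not cut by any ray in $\ovl R$. The paper's argument is two lines long and calls the converse ``obvious''.

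Two places where you overcomplicate. First, the step of adjoining co-landing rays is unnecessary: since $X$ is a closed topological disc, the rays on $\partial X$ already meet in pairs at each vertex, and because $f_c$ is locally injective at every landing point (these are repelling (pre)periodic points, never the critical point), forward images of distinct co-landing rays remain distinct and co-landing. So the forward orbit $\ovl R = \bigcup_{k\ge 0}\bigcup_j f_c^k(R_j)$ already satisfies the ``at least two rays per landing point'' condition. Second, there is no need to realize $X$ as a depth-$n$ piece and pull back the equipotential: take the defining equipotential of $F_{\ovl R}$ to be the one on $\partial X$, and then $X$ is a depth-$0$ piece directly. The hypothesis that $\bigcup_{k\ge 1} f_c^k(\partial X)$ misses $\intr X$ is exactly what ensures no ray of $\ovl R$ passes through $\intr X$, so the depth-$0$ partition has $X$ as one of its tiles. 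Your inductive verification and equipotential bookkeeping are correct in spirit but not needed.
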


Indeed, assume that $X$ is bounded by $R_1, \ldots, R_s$ and
consider $\ovl R=\bigcup_{k\ge 0}\bigcup_j f^k(R_j)$. If the
assumption in Proposition~\ref{prop0.1} is satisfied, then $X$ is a
puzzle piece in the family $F=F_{\ovl R}$. The converse is obvious.

Assume that the critical value $c$ is in the interior of a puzzle
piece $X^0_1$ of depth $0$. It is well known that there exists a
topological disc $\XX^0_1$ in the parameter plane such that the
boundary $\partial \XX^0_1$ has the same combinatorial structure as
$\partial X^0_1$. In addition, for every parameter $c\in \intr
\XX^0_1$, all puzzle pieces from $F_{\ovl R}$ of depth $0$ ``exist''
and depend continuously on $c$. We will say that the family $F_{\ovl
R}$ \textit{exists} in $\XX^0_1$. Depending on properties of $\ovl
R$, the family $F_{\ovl R}$ may exist in a bigger domain.

In general, the family $F_{\ovl R}$ depends on $c$. Let
$X^n_i\in F_{\ovl R}$ be a puzzle piece in the dynamical plane
of $z^2+c_1$, where $c_1\in \intr \XX^0_1$. We will say that
$X^n_i$ \textit{exists} for $c_2\in \intr \XX^0_1$ if there
exists a puzzle piece $\widetilde{X^n_i}$ in the dynamical plane of
$z^2+c_2$ such that the boundaries of $\widetilde{X^n_i}$ and
$X^n_i$ are combinatorially equivalent. To simplify, we will write
$\widetilde{X^n_i}=X^n_i$; this convention allows us to use the
notion of puzzle pieces without referring to a particular dynamical
plane.

Let $\XX^n_i\subseteq \XX^0_1$ be a topological disc
in the parameter plane bounded by parameter rays and an
equipotential. Then $\XX^n_i$ is called the \textit{parapuzzle
piece associated to $X^n_i$} for the family $\mathcal F_{\ovl R}$ if
there exists
a puzzle piece $X^n_i\in F_{\ovl R}$ with the same combinatorics and
the following equality holds:
\begin{equation}
\intr \XX^n_i \ =\ \{c\in\C\mid X^n_i \text{ exists for }c\text{ and
} c\in \intr X^n_i\}.
\end{equation}

The next property shows a relation between puzzle pieces in
different families.

\begin{property}
\label{prop0.2} Suppose a puzzle piece $X^0_i\in F_1$ is inside a
puzzle piece $Y^0_j\in F_2$ of depth $0$. Then any preimage of
$X^0_i$ under $f_c^n$ is inside a puzzle piece of depth $n$ from
$F_2$.

\end{property}

\section{The Combinatorial Construction}
\label{CombinSect}

Let $\MM_{s}$ be a small copy of the Mandelbrot set. Then
the component of $\MM \backslash \MM_{s}$ containing
the main cardioid is ``big,'' and any other component is a part of
$\MM$ cut off by two external rays landing at some tip of
$\MM_{s}$ (such a component is called a \textit{decoration}
\cite{KL}; and a tip of $\MM_{s}$ must be a Misiurewicz
point).

To be more precise, each $\MM_s$ has an integer $q\ge 2$ so that
each tip of $\MM_s$ is the landing point of exactly $q$ parameter
rays. They chop off $q - 1$ decorations $\LL_{k}$ (closures of the
components of $\MM\ \backslash \{t\}$ that do not intersect the main
cardioid) from $\MM$.

If the decoration conjecture was not true, then there
   would be some $\eps>0$ and infinitely many decorations
   $\LL_{1},\LL_{2},\ldots$ with diameters at least
   $2\eps$. Denote by $a_i$ the tip of $\LL_i$ (defined as
the point of intersection of $\LL_i$ and $\MM_s$).
We may extract a subsequence so that all $a_i$ are different.
   Now let us choose in each $\LL_{i}$ any point $b_{i}$
   such that $|a_{i}-b_{i}|\ge \eps$; let $c_{0}$
   be an accumulation point of the sequence $\{b_{i}\}$; we may assume
that all $|b_i-c_0|<\eps/2$.

   \begin{proposition}
The quadratic map $f_{c_{0}}(z)=z^{2}+c_{0}$ is an infinitely
   renormalizable polynomial. Moreover, $c_{0}$ belongs to $\MM_{s}$.
\end{proposition}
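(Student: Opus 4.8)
The plan is to show first that $c_0$ cannot be a parameter at which the fiber of $\MM$ is trivial, and then to locate $c_0$ precisely inside $\MM_s$. The key point is that $c_0$ is an accumulation point of parameters $b_i$ lying in distinct decorations $\LL_i$ of $\MM_s$, where each $\LL_i$ has diameter at least $2\eps$ and $|a_i - b_i| \ge \eps$; we arranged $|b_i - c_0| < \eps/2$, so in fact $c_0$ is well inside the part of $\MM$ cut off, far from the tips.

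First I would argue that $c_0 \in \MM$. Each $\LL_i \subseteq \MM$ and $\MM$ is closed, so any accumulation point of $\{b_i\} \subseteq \MM$ lies in $\MM$. Next I would show the fiber of $\MM$ at $c_0$ is non-trivial. Recall that a decoration $\LL_i$ is a component of $\MM \sm \{a_i\}$ cut off from the main cardioid by the $q$ parameter rays landing at the tip $a_i$; the whole decoration $\LL_i$ lies in the (closed) wake bounded by two of those rays, and this wake shrinks toward $a_i$ as one approaches $\MM_s$ more deeply. Since the diameters $\operatorname{diam}\LL_i \ge 2\eps$ do not shrink, infinitely many distinct $\LL_i$ with this property force the $a_i$ themselves to accumulate — say at a point $t_\infty \in \MM_s$, which is then an accumulation of Misiurewicz tips of $\MM_s$, hence a parameter at which $\MM_s$ (and thus $\MM$) is not locally connected in the naive sense; more to the point, $c_0$ itself, being a limit of the $b_i$ with $|b_i - a_i| \ge \eps$ and $|b_i - c_0| < \eps/2$, is a limit of points from infinitely many distinct decorations, so every neighborhood of $c_0$ meets infinitely many decorations $\LL_i$ of \emph{positive} diameter bounded below; this is incompatible with $\MM$ being locally connected at $c_0$, so the fiber of $\MM$ at $c_0$ is non-trivial. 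By Yoccoz's theorem (and Kahn--Lyubich for $d \ge 2$), every non-infinitely-renormalizable parameter has trivial fiber; hence $f_{c_0}$ must be infinitely renormalizable.

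Finally, to see $c_0 \in \MM_s$: the renormalization combinatorics of $f_{c_0}$ is constrained because $c_0$ is surrounded (on the ``decoration side'') by the rays landing at the tips $a_i \to t_\infty \in \MM_s$. Since each $b_i$ lies in a decoration of $\MM_s$, the first return combinatorics associated to the little Mandelbrot set $\MM_s$ is visible at every $b_i$: each $b_i$ is separated from the main cardioid by the ray pair of $\MM_s$ at the appropriate tip, which means $b_i$ and hence (taking limits, using that the relevant ray pairs land and vary continuously) $c_0$ lies in the wake of $\MM_s$, i.e. in the parameter region where the first renormalization corresponding to $\MM_s$ is defined. Combined with $c_0 \in \MM$, this puts $c_0$ in the small copy $\MM_s$ itself, since $\MM_s$ is exactly the set of parameters in its wake that also lie in $\MM$ and for which all iterates of the renormalization are defined — and the latter follows from infinite renormalizability established above (one checks the renormalization levels are compatible with those of $\MM_s$).

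The main obstacle will be the middle step: making precise that proximity to $c_0$ of infinitely many decorations of bounded-below diameter genuinely obstructs local connectivity at $c_0$, and organizing the combinatorial bookkeeping so that the tips $a_i$ accumulate at a single Misiurewicz-type parameter on $\partial\MM_s$ rather than escaping to the boundary of the domain. This requires the standard fact that the decorations attached at a fixed tip have diameters tending to $0$ (so the diameter lower bound forces the tips to move), together with the structure of wakes of $\MM_s$; once that is in hand, the identification $c_0 \in \MM_s$ and the appeal to Yoccoz are routine.
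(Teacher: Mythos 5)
Your argument for non-local-connectivity and the appeal to Yoccoz are essentially right (if a bit informal about exactly which neighborhoods of $c_0$ meet which sets), and match the paper. But the step where you place $c_0$ inside $\MM_s$ has a real gap, and your overall order of argument is backwards relative to the paper in a way that makes this gap hard to close.

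You try to conclude $c_0\in\MM_s$ from the two facts that $c_0$ lies in the wake of $\MM_s$ and that $f_{c_0}$ is infinitely renormalizable, saying ``one checks the renormalization levels are compatible with those of $\MM_s$.'' This check is not a formality, and in fact the implication is false without it: a decoration of $\MM_s$ contains small copies of $\MM$ of its own, so there are plenty of infinitely renormalizable parameters in the wake of $\MM_s$ that lie in a decoration rather than in $\MM_s$ itself. Infinite renormalizability tells you nothing about whether the critical orbit stays in the specific return domain $Y^p_1$ defining $\MM_s$; you would have to rule out that $c_0$ lies in some little copy $\MM'$ sitting inside a decoration, and your argument gives no handle on that. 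Also, the ``accumulation of tips $a_i$ at a single $t_\infty$'' you invoke is neither needed nor obviously available from the hypotheses.

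The paper establishes $c_0\in\MM_s$ \emph{first}, by a purely topological separation argument, and uses nothing about renormalization for this step: each $\LL_j$ is cut off from $\MM\setminus\LL_j$ by a parameter ray pair landing at $a_j\in\MM_s$, and $\LL_i\subset\MM\setminus\LL_j$ for $i\ne j$; since $c_0$ is a limit of $b_i$ drawn from infinitely many distinct $\LL_i$, it cannot lie inside any single decoration (nor inside the component of $\MM\setminus\MM_s$ containing the main cardioid) except possibly at the tip $a_j$, which is itself in $\MM_s$. Hence $c_0\in\MM_s$. You already state the key separation fact in your proposal, so you have the ingredient; you just route around it. Once $c_0\in\MM_s$ is known, the failure of local connectivity is cleanly expressed by observing that $N_{\eps/2}(c_0)\cap\MM_s$ and $N_{\eps/2}(c_0)\cap\LL_i$ are pairwise disjoint (because $|a_i-c_0|>\eps/2$), and only then does Yoccoz give infinite renormalizability. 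In short: do the elementary separation argument up front for $c_0\in\MM_s$, and drop the attempt to recover $\MM_s$-membership from infinite renormalizability plus wake membership.
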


\begin{proof}
    Every decoration $\LL_{i}$ is separated from $\MM\sm\LL_{i}$
     by two parameter
    rays landing at $a_{i}\in \MM_s$. In addition, $\LL_{j}\subset
    \MM\sm\LL_{i}$ for $i\not=j$;
    therefore the accumulation point $c_0$ of decorations can not be
in any $\LL_{i}$ unless
    $c_0=a_i$. Thus $c_0\in\MM_s$.

    Since $|a_i-b_i|\ge\eps$ but $|b_i-c_0|<\eps/2$, the sets
\[
N_{\eps/2}(c_0)\cap \MM_s \quad \text{and} \quad N_{\eps/2}(c_0)\cap L_i
\]
are all disjoint (where $N_{\eps/2}(c_0)$ denotes the
$\eps/2$-neighborhood of $c_0$). This implies that $\MM$ is not
locally connected at $c_0$. By Yoccoz's results (see \cite{Hu})
 $z^{2}+c_{0}$ is an infinitely
   renormalizable polynomial.
\end{proof}

\subsection{Small copies of the Mandelbrot set}

\label{sec2.1}

   A repelling periodic cycle
$\ovl{\alpha}=\{\alpha_k\}_{k=0}^{n-1}$ is called \textit{dividing}
if there are at least two rays landing at each $\alpha_k$. By
$R=R(\ovl{\alpha})$ we denote the configuration of rays landing at
$\ovl{\alpha}$.

\begin{figure}
\includegraphics[width=9cm]{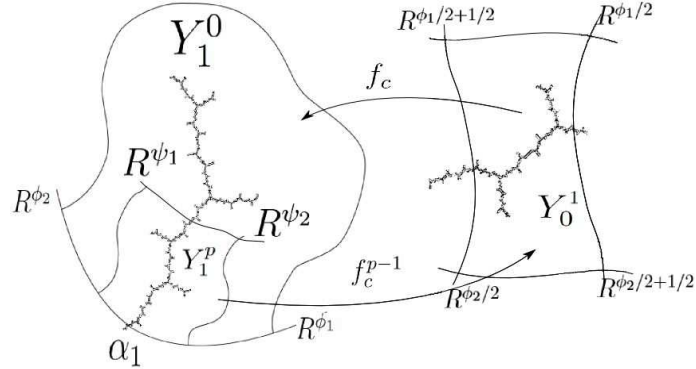}
  \caption{ Construction of a renormalization domain.}
\end{figure}

\begin{proposition}[{see \cite{Mi}}]
\label{prop1}
   Let $\ovl{\alpha}=\{\alpha_k\}_{k=0}^{n-1}$ be a dividing repelling
periodic cycle.
\begin{itemize}
\item  Let $Y_1$
be the component of $\mathbb{C}\backslash R(\ovl\alpha)$ containing
the critical value $c$. Then $Y_1$ is a sector bounded by two
external rays.
\item Let $Y_0$ be the component of $\mathbb{C}\backslash
f_c^{-1}R(\ovl\alpha)$
  containing the critical
point $0$. Then $Y_0$ is bounded by four external rays: two of them
land at a periodic point $\alpha_k$, and two others land at the
symmetric point $-\alpha_k$.
\item The rays of $R(\ovl\alpha)$ form either one or two cycles under
iterates of $f_c$. All cycles have the same period.

\end{itemize}

\end{proposition}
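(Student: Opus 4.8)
The plan is to work directly with the dynamics of $f_c$ on the ray configuration $R(\ovl\alpha)$ and the standard structure of landing patterns at repelling cycles. First I would recall that, since $\ovl\alpha$ is a repelling cycle lying in the Julia set, the rays landing at each $\alpha_k$ are permuted by $f_c$ in a combinatorial rotation; in particular the number of rays landing at each point of the cycle is the same integer $r\ge 2$ (the ``dividing'' hypothesis guarantees $r\ge2$), and $f_c$ maps the rays at $\alpha_k$ to those at $\alpha_{k+1}$ bijectively. This already gives the third bullet: the total collection $R(\ovl\alpha)$ consists of $nr$ rays, $f_c$ acts on them, and since the return map $f_c^n$ fixes each $\alpha_k$ and acts on its $r$ rays as a cyclic rotation by some fixed rotation number $p/r$ (same for all $k$, because the $\alpha_k$ are all in one $f_c$-orbit), the rays split into $\gcd(r,\text{stuff})$ — more precisely into exactly one or two cycles of equal period under $f_c$, the count depending on parity/combinatorial data of the rotation number. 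The precise bookkeeping here is routine once the rotation-number picture is set up.

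For the first bullet, I would observe that $\mathbb{C}\setminus R(\ovl\alpha)$ has finitely many components, each a sector bounded by arcs of rays meeting at cycle points. The critical value $c=f_c(0)$ lies in some component $Y_1$. The key point is that $Y_1$ is bounded by exactly two rays: if it were bounded by more, then its closure would contain at least two points of $\ovl\alpha$ (or one cycle point with three rays configured so that $c$ is ``deeply inside''), but I would rule this out by pulling back — $f_c^{-1}(R(\ovl\alpha))$ refines the partition, $0$ is the unique critical point, and the component of the pullback partition containing $0$ maps $2$-to-$1$ onto $Y_1$; counting boundary rays on the two sides forces $Y_1$ to be a single sector between two rays landing at one cycle point $\alpha_k$. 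The cleanest route is: $Y_1$ must be a sector at a single landing point (since distinct landing points of $R(\ovl\alpha)$ are separated by the ray pattern and $c$ lies in the interior), and among the sectors at $\alpha_k$ the one containing $c$ is bounded by exactly its two adjacent rays.

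The second bullet follows by lifting: $Y_0$ is the component of $\mathbb{C}\setminus f_c^{-1}R(\ovl\alpha)$ containing $0$, and $f_c$ restricted to a neighborhood of $0$ is $2$-to-$1$ branched at $0$, so $f_c(Y_0)\subseteq Y_1$ and $f_c|_{\partial Y_0}$ is $2$-to-$1$ onto $\partial Y_1$. Since $\partial Y_1$ consists of the two rays landing at $\alpha_k$, its preimage consists of the two rays landing at $\alpha_k$ together with the two rays landing at $-\alpha_k$ (the other preimage of $\alpha_k$), and these four rays bound $Y_0$ — here one uses that $f_c^{-1}(\alpha_k)=\{\alpha_k,-\alpha_k\}$ and that $0$ lies between $\alpha_k$ and $-\alpha_k$ in the refined partition. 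I expect the main obstacle to be the careful combinatorial argument in the first bullet — ruling out the possibility that $Y_1$ is bounded by more than two rays, which requires knowing that the landing pattern at a repelling cycle is ``tame'' (no three rays landing at a point can all bound a common sector containing the critical value in a way that survives the pullback). This is exactly where one invokes the structure theory of rays at repelling periodic points (e.g.\ from \cite{Mi}), so in a self-contained writeup this is the step that needs the most care; everything else is lifting under the quadratic map and counting rays.
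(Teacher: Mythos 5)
The paper itself offers no proof of Proposition~\ref{prop1}: it is imported wholesale from Milnor \cite{Mi}, where it is the combination of the orbit-portrait lemma (the rays of $R(\ovl\alpha)$ form one or two cycles of the same period), the characteristic-arc lemma (the critical value lies in the sector cut off by the shortest complementary arc of angles, hence bounded by exactly two rays), and the degree-two lift to the component containing the critical point. Your reduction of the second bullet to the first, by lifting under $f_c$ and using the $z\mapsto -z$ symmetry of $f_c^{-1}R(\ovl\alpha)$, is correct and standard, and you are right that the first bullet is where the real content lies.

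However, the argument you sketch for the first bullet has a genuine gap, and the step you call the ``cleanest route'' is actually false. The claim that ``$Y_1$ must be a sector at a single landing point since distinct landing points of $R(\ovl\alpha)$ are separated by the ray pattern'' does not follow: separation of the cycle points by the ray configuration is perfectly compatible with several of them lying on the boundary of a single complementary face. Already for $n=2$, $r=2$ (e.g.\ the portrait $\{1/5,4/5\}$, $\{2/5,3/5\}$) the four rays bound exactly three faces: two two-ray sectors at $\alpha_0$ and $\alpha_1$ respectively, and a third ``strip'' face bounded by all four rays with both $\alpha_0$ and $\alpha_1$ on its boundary. The content of the first bullet is precisely that the critical value never lands in such a multi-vertex face; that is Milnor's characteristic-arc lemma, not a consequence of ``separation.'' Your fallback (pull back to $Y_0$ and count boundary rays) is the right instinct, but as stated it only tells you that $\partial Y_0$ carries twice as many boundary rays as $\partial Y_1$; it does not bound that number. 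Similarly, for the third bullet, ``the precise bookkeeping here is routine once the rotation-number picture is set up'' undersells it: that a degree-two map never produces more than two ray cycles (rather than, say, $r$ cycles when the first-return rotation number is trivial) is itself a nontrivial lemma in \cite{Mi} constraining admissible orbit portraits. In the end both you and the paper defer to \cite{Mi}; the difference is that the paper does so explicitly, whereas your sketch presents a plausible-looking but incorrect elementary argument for the crucial first bullet.
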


Let $p\ge 2$ be the period of the rays $R(\ovl\alpha)$, let
$\alpha_0$ be the periodic point on the boundary of $Y_0$, and let
$\alpha_1$ be the point on the boundary of $Y_1$. Define $Y^0_1$ to
be $Y_1$ truncated by the equipotential of height $1$. Let $Y^p_1$
be the unique component of $f_c^{-p}(Y^0_1)$ attached to $\alpha_1$.
By
  construction we have $Y^p_1\subsetneqq Y^0_1$.

The following definition is one of several equivalent ways of
defining (simple) renormalization and small Mandelbrot sets:

\begin{definition}
\label{defMM} A quadratic map $f_c$ is called \textbf{DH
renormalizable} of period $p$ if there exists a cycle $\ovl\alpha$
as above such that $c$ will not escape from $Y^p_1$ under iteration
of $f^p:Y^p_1 \rightarrow Y^0_1$ (in particular $c\in Y^p_1$). The
associated \textbf{small copy of the Mandelbrot set} is the closure
of the set of parameters $c$ such that $z^2+c$ is DH renormalizable
with the fixed ray pattern $R(\ovl\alpha)$.
\end{definition}

It is known \cite{DH} that a small copy of the Mandelbrot set is
indeed canonically homeomorphic to $\MM$.

\textit{For the rest of the paper, we will fix the cycle
$\ovl{\alpha}$ and the corresponding small copy $\MM_s$ of the
Mandelbrot set.}

   We now give a detailed description of the construction of $Y^p_1$.
   By Proposition~\ref{prop1} the sector $Y_1$ is bounded by two
   rays;
   denote them by $R^{\phi_1}$ and $R^{\phi_2}$.
   The strip $Y_0$ is bounded by two pairs of rays. One of them lands
at the periodic point $\alpha_0$; we can assume that this ray pair
has the angles $R^{\phi_1/2+1/2}$, $R^{\phi_2/2}$ (possibly by
interchanging $\phi_1$ and $\phi_2$). The other pair is then
$R^{\phi_1/2}$, $R^{\phi_2/2+1/2}$ and lands at $-\alpha_0$.
   The strip $\lfloor Y^p_1 \rfloor$ is defined as $f^{-p+1}(Y_0)$,
   where the pullback is taken along the
   orbit of the periodic rays $R^{\phi_1}$ and $R^{\phi_2}$.
   Further, the strip $\lfloor Y^p_1 \rfloor$ is bounded by two pairs of rays
   and one of them consists of $R^{\phi_1}$, $R^{\phi_2}$ landing at
   $\alpha_1$; denote by $R^{\psi_1}$ and $R^{\psi_2}$ the other
   pair. The last two rays depend continuously on the parameter $c$ whenever
   $R^{\phi_1}$ and $R^{\phi_2}$ do (the critical value can not cross
the forward orbit of $R^{\psi_1}$ and
   $R^{\psi_2}$).

It is known \cite{Mi} that the parameter rays $\RR^{\phi_1}$ and
$\RR^{\phi_2}$ land together at the root of $\MM_s$. We define
$\YY_1$ to be the sector bounded by
$\ovl{\RR^{\phi_1}\bigcup\RR^{\phi_2}}$ and containing $\MM_s$.
Whenever $c\in \intr \YY_1$ the ray configuration $R(\ovl{\alpha})$
depends holomorphically on $c$. The next statement implies that
``small Julia sets do not intersect'' (except at points of the orbit
$\ovl\alpha$); the proof follows from Proposition~\ref{prop1} and
the definition of the strip $Y^p_1$.

\begin{property}
\label{property1}
  If $c$ belongs to the sector $\YY_1$, then for all $0\le k<p$
  the interiors of strips $f_c^k \left(\lfloor Y^{p}_1\rfloor\right)$
  are disjoint and do not intersect $Z^0_0$.
\end{property}

Let the parapuzzle piece $\YY_1^0$ be the sector $\YY_1$ truncated
by the equipotential of height $1$. If $c$ belongs to $\intr
\YY_1^0$, then the puzzle pieces $Y^p_1$ and $Y^0_1$ depend
continuously on $c$. \textit{From now we assume that $c\in \intr
\YY_1^0$}.

\begin{figure}
\includegraphics[width=6cm]{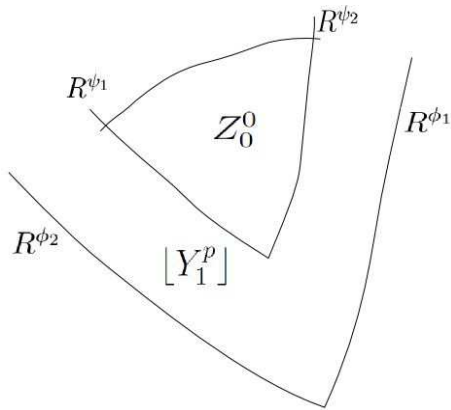}
  \caption{The puzzle piece $Z^0_0$ and the strip $\lfloor
  Y^p_1\rfloor$.
  \label{Fig:ZZ}
  }

\end{figure}
By $Z^0_0$ we denote the sector bounded by the rays $R^{\psi_1}$ and
    $R^{\psi_2}$, not containing $0$, and truncated by the equipotential
    of height $1/2$ (Figure~\ref{Fig:ZZ}).

Let $\ovl R=\bigcup_{k\ge0} \{f^k_c(R^{\psi_1}),f^k_c(R^{\psi_2})\}$
be the forward orbit of the rays $R^{\psi_1}$ and
    $R^{\psi_2}$; it contains $R(\ovl\alpha)$ but perhaps not all the rays that land at the same point as
$R^{\psi_1},R^{\psi_2}$. Using $\ovl R$ and the equipotential of
height
    $1/2$, we get a partition of a neighborhood of the
    Julia set. Denote by $F$ the corresponding puzzle family. From
    the construction and Proposition~\ref{prop1}, it follows that $F$
exists in $\YY^0_1$
     and the
    puzzle piece $Z^0_0$ is in $F$.
(Note that with respect to this family, $Y^0_1$ is not a puzzle
piece but the union of two such pieces, separated by the rays
$R^{\psi_1},R^{\psi_2}$. The set $Y^p_1$ is not a puzzle piece in
$F$ either because it is bounded by the equipotential at height
$2^{-p}$; but there is a puzzle piece of depth $0$ bounded by the
same rays as $Y^p_1$ and the equipotential at height $1/2$.)

\begin{definition}
\label{def1} We define the collection $\{Z^{n}_{i}\}\subset F$ as
the set of maximal conformal pullbacks of $Z^0_0$. This means that
$Z^{n}_{i}\not\subseteq Z^{m}_{l}$ and $f^{n}$ maps conformally
$Z^{n}_{i}$ to $Z^0_0$ for any $i,l$ and $m<n$.
\end{definition}

  Let $\LL_{i,1},\ldots, \LL_{i,q-1}$ be the
group of $q-1$ decorations touching a common Misiurewicz point $a_i$
which is a tip of $\MM_s$. There exists a pair of parameter rays
$\RR_1$, $\RR_2$ that land together at $a_i$ and separate the group
$\bigcup_{j=1}^{q-1} \LL_{i,j}$ from $\MM\setminus
\bigcup_{j=1}^{q-1} \LL_{i,j}$. Also, if $c$ belongs to
$\bigcup_{j=1}^{q-1} \LL_{i,j}$, then there
  exists a constant $n=n(i)$ such that $f_{c}^{n}(0)\in
  Z^{0}_{0}$ but $f_{c}^{k}(0)\not\in
  Z^{0}_{0}$ for $0\le k<n$ (the constant $n(i)$ is called the
\emph{escaping time} \cite{L1}).

\begin{definition}
\label{def2} Denote by $\ZZ^{n}_{i}$ the parapuzzle piece containing
$\bigcup_{i=j}^{q-1} \LL_{i,j}$,
  bounded by $\RR_1$ and $\RR_2$, and truncated by
  the equipotential of height $1/2^n$.
\end{definition}

The parapuzzle pieces $\ZZ^{n}_{i}$ correspond to dynamical puzzle
pieces $Z^{n}_{i}$:

\begin{lemma}[Parapuzzles Pieces]
\label{lem1}  For any $n$ the set
\[
\JJ_n:=\intr\YY^{0}_{1}\backslash \bigcup_{m\le n-1}\bigcup_j
\lfloor\ZZ^{m}_{j}\rfloor
\]
is an open Jordan disk. When $c$ moves in $\JJ_n$, the boundaries of
all $Z^{n}_{i}$
  are disjoint and move holomorphically; moreover
there is a one to one correspondence between $Z^{n}_{i}\subset
Y^{0}_{1}$ and $\ZZ^{n}_{i}\subset\YY^{0}_{1}$ such that:
$$c\in \ZZ^{n}_{i} \text{ if and only if } c\in
Z^{n}_{i}.$$
\end{lemma}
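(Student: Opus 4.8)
The plan is to establish the three assertions of the lemma — that $\JJ_n$ is a Jordan disk, that the pieces $Z^n_i$ move holomorphically with disjoint boundaries over $\JJ_n$, and that the correspondence $\ZZ^n_i \leftrightarrow Z^n_i$ with the stated equivalence holds — by induction on $n$, using the standard wringing/holomorphic-motion machinery of parapuzzles. First I would set up the base case: for $n$ small (say $n=0$ or $n=1$), $\JJ_n$ is simply $\intr\YY^0_1$ (nothing has been removed yet), which is an open Jordan disk by the construction of $\YY^0_1$ in Section~\ref{sec2.1}; and over this region the depth-$0$ puzzle family $F$ exists and moves holomorphically, so the finitely many pieces $Z^n_i$ of the given depth that lie in $Y^0_1$ also move holomorphically, being conformal pullbacks of $Z^0_0$ along an orbit that, by Definition~\ref{def1} and Property~\ref{prop0.2}, never hits the critical point. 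The base-case version of the correspondence is the assertion that the defining equation for the parapuzzle piece $\ZZ^n_i$ of Definition~\ref{def2} coincides with $\{c : Z^n_i \text{ exists for }c \text{ and } c\in\intr Z^n_i\}$ — i.e.\ that the two parapuzzle-piece definitions (the one via bounding rays $\RR_1,\RR_2$ and equipotential height $2^{-n}$, and the one via the displayed equation in the paragraph following Property~\ref{prop0.2}) agree. This is where the escaping-time description comes in: $c\in\bigcup_j\LL_{i,j}$ is characterized by $f_c^n(0)\in Z^0_0$ with no earlier landing, which is exactly $c\in Z^n_i$ for the pullback $Z^n_i$ of $Z^0_0$ of depth $n$ attached at the appropriate spot.

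For the inductive step, assume the statement for $n-1$. The key point is that $\JJ_n = \JJ_{n-1} \sm \bigcup_j \lfloor\ZZ^{n-1}_j\rfloor$, and by the inductive hypothesis the $\lfloor\ZZ^{n-1}_j\rfloor$ are (untruncated versions of) parapuzzle pieces that are closed Jordan subdisks of $\JJ_{n-1}$ whose boundaries touch $\partial\JJ_{n-1}$ only along the equipotential/rays in a controlled way; removing them from a Jordan disk along boundary arcs again yields a Jordan disk, which gives the topological assertion. Over $\JJ_n$ the pieces $Z^n_i$ are obtained as conformal pullbacks of $Z^{n-1}_j$-type pieces (or directly of $Z^0_0$) along orbits that avoid the critical point precisely because we have excised the $\ZZ^{n-1}_j$; hence they move holomorphically and their boundaries stay disjoint. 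The biholomorphic correspondence $c\in\ZZ^n_i \iff c\in Z^n_i$ then follows from the maximal-pullback definition (Definition~\ref{def1}) together with the escaping-time characterization of the decorations: the first return of the critical orbit to $Z^0_0$ lands in the particular pullback $Z^n_i$ exactly when the parameter lies in the corresponding parapuzzle piece, and uniqueness of pullback pieces (Property at the start of Section~\ref{sec1}) makes the correspondence well-defined and injective.

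The main obstacle — and the place where care is genuinely needed — is verifying that $\JJ_n$ is a Jordan \emph{disk} rather than merely a domain: one must check that distinct $\lfloor\ZZ^{n-1}_j\rfloor$ are pairwise disjoint (or meet only along boundary pieces) and that each is attached to the already-constructed region in a way that does not pinch off $\JJ_n$ or disconnect it. This requires the combinatorial fact that the rays bounding different $\ZZ^{n-1}_j$ land at distinct Misiurewicz tips $a_j$ of $\MM_s$ and separate disjoint groups of decorations — which in turn rests on Property~\ref{property1} (``small Julia sets do not intersect'') transported to the parameter plane via the holomorphic motion. Once this separation is in hand, the rest is the routine parapuzzle bookkeeping: continuity and injectivity of the holomorphic motion, and matching the two descriptions of $\ZZ^n_i$. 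I would therefore spend the bulk of the proof on the disjointness/attachment picture and treat the holomorphic-motion and escaping-time parts as applications of the already-cited results of Section~\ref{sec1} and Proposition~\ref{prop1}.
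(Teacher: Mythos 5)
The paper offers no proof of this lemma; it simply calls it a classical result and cites Avila--Lyubich--Shen \cite{ALS}. Your proposal sketches the inductive argument that one finds in that literature, so the two are compatible rather than genuinely different: the key ideas (excise lower-depth parapuzzle sectors to obtain $\JJ_n$, propagate a holomorphic motion of the depth-$n$ puzzle boundaries over $\JJ_n$, and match $\ZZ^n_i\leftrightarrow Z^n_i$ via the escaping-time description) are exactly the standard ones. Two places to tighten before the sketch becomes a proof. First, the base-case claim that $\JJ_1=\intr\YY^0_1$ implicitly uses that no piece $\ZZ^0_j$ occurs; this holds because the escaping time $n(i)$ of every decoration is $\ge 1$ (the sector $Z^0_0$ was chosen not to contain $0$, so $f_c^0(0)\notin Z^0_0$), hence Definition~\ref{def2} produces no depth-zero parapuzzle piece. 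Second, the Jordan-disk step needs more than the separation of the tips $a_j$: you must also know that the parameter ray pairs $\RR_1,\RR_2$ actually land at those Misiurewicz points (a nontrivial parameter-plane landing result), that each untruncated wedge $\lfloor\ZZ^{n-1}_j\rfloor$ meets $\partial\YY^0_1$ in a single boundary arc, and that distinct wedges have disjoint interiors, which follows from the nestedness property of puzzle pieces combined with the maximality in Definition~\ref{def1} transported to parameter space by the inductive hypothesis. These are precisely the technical facts the cited reference supplies; with them in hand, your outline is sound.
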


\begin{proof}
This is a classical result (see for example \cite[Lemma $3.3$,
10]{ALS}).
\end{proof}

\subsection{Secondary Decorations}

\label{sec2.2}

As $z^2+c_0$ is infinitely renormalizable, there exists a small copy
$\MMp_s$ of the Mandelbrot set such that $\MMp_s$ contains $c_0$ and
$\MMp_s\subsetneq\MM_s$. Let $\ovl{\alpha'}$ be the dividing
periodic cycle associated with $\MMp_s$, and let $p'$ be the period
of the rays. In general, \textit{we will use an apostrophe to mark
that some structure is associated with $\MMp_s$} (except in
Subsection~\ref{Sub:Unsimple}). All above results for $\MM_s$ are
true for $\MM'_s$. For instance, the puzzle and parapuzzle pieces
$\YY'^0_1$, $Z'^m_j$, $\ZZ'^m_j$ are defined in the same way as
$\YY^0_1$, $Z^n_i$, $\ZZ^n_i$.

More precisely, the sector $Y'_1$ is bounded by a periodic dynamic
ray pair $R^{\phi'_1}$ and $R^{\phi'_2}$. Pulling back this sector
for $p'$ iterations, we obtain a strip within $Y'_1$ that is bounded
by two dynamic ray pairs consisting of $R^{\phi'_1}$ and
$R^{\phi'_2}$ respectively of $R^{\psi'_1}$ and $R^{\psi'_2}$. The
latter two rays, together with their forward orbits, define the
family $F'$ of puzzle pieces\footnote{As before, $Y'^0_1$ is not in
$F'$.} associated to $\MMp_s$. In particular, $Z'^0_0\in F'$ is the
puzzle piece bounded by the two rays $R^{\psi'_1}$ and $R^{\psi'_2}$
and the equipotential at height $1$. The parapuzzle piece $\YYp^0_1$
is bounded by the parameter ray pair $\RR^{\phi'_1}, \RR^{\phi'_2}$
and truncated at height $1$. The family $F'$ exists for all $c\in
\YYp^0_1$. The puzzle pieces $Z'^m_j$ are maximal conformal
pull-backs of $Z'^0_0$, and if $Z'^m_j\subset Y'^0_1$, then
$\ZZ'^m_j$ exists.

\begin{figure}
\includegraphics[width=0.45\textwidth]{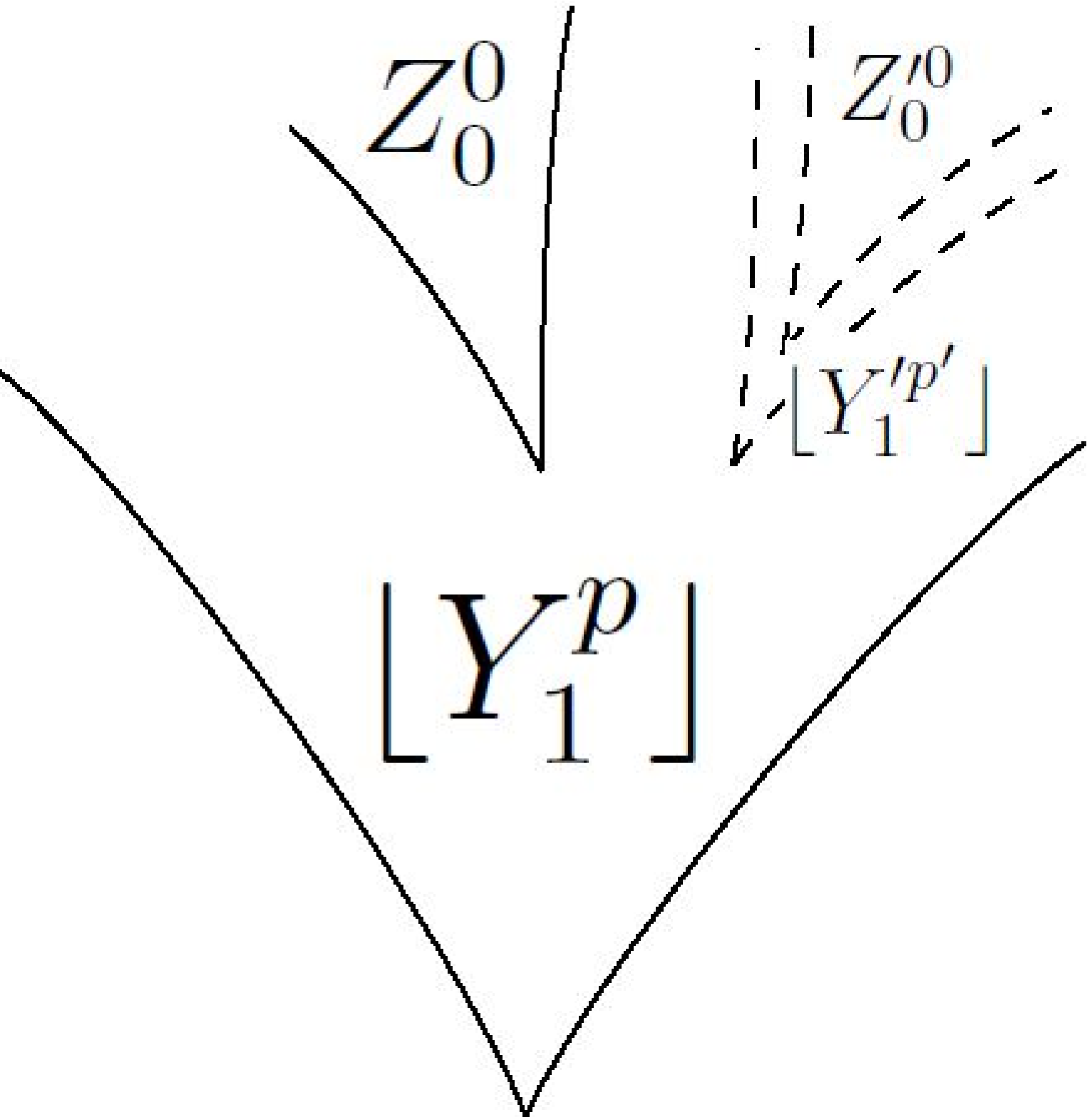}
\includegraphics[width=0.45\textwidth]{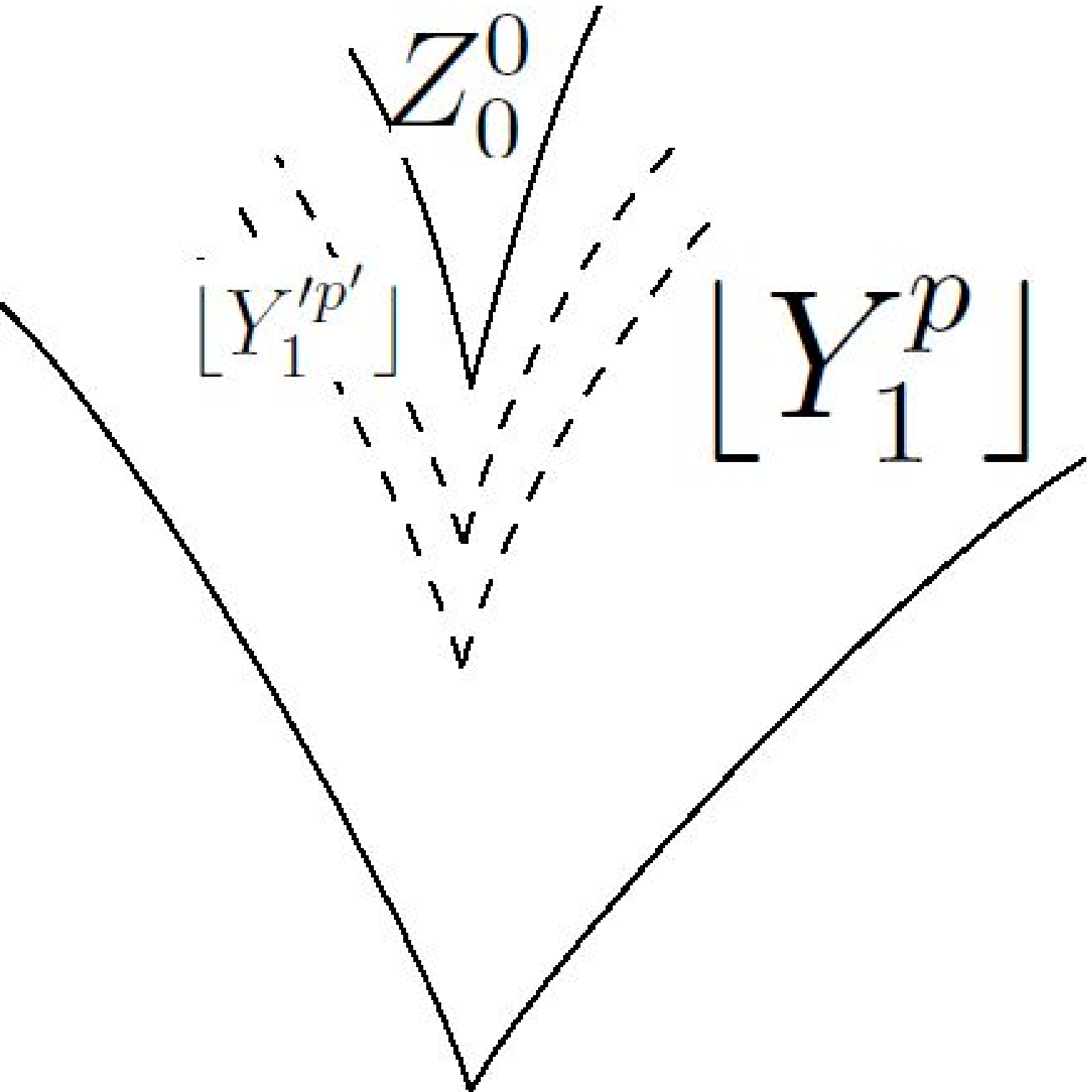}
  \caption{Two possibilities for the strip $\lfloor
Y'^{p'}_1\rfloor$. Right: $\lfloor Y'^{p'}_1\rfloor$ separates
$Z^0_0$ and $0$. Left: $\lfloor Y'^{p'}_1\rfloor$ does not separate
$Z^0_0$ and $0$.} \label{Fig:Strips}
\end{figure}

  Without loss of generality we can assume that
all decorations $\LL_i$ are inside $\YYp^0_1$. \textit{From now on
we will only consider parameters $c\in \intr \YYp_1^0$} (observe
that $\YYp^0_1\subset\YY^0_1$). The next proposition is
``equivalent'' to $\MMp_s\subset \MM_s$.

\begin{proposition}
\label{Prop:SubStrip}
 For every $k'<p'$ there exists a $k<p$ such that
 \begin{equation}
 \label{StripeIncl}
 f^{k'}_c\left(\lfloor Y'^{p'}_1\rfloor
 \right)\subset \intr  f^{k}_c\left(\lfloor Y^{p}_1\rfloor
 \right).
 \end{equation}

\end{proposition}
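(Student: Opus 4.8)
The plan is to reduce the claim to a purely combinatorial statement about nested renormalization structures, and then to exploit the defining property of DH renormalization for $\MM'_s$ inside $\MM_s$. The point is that $\MMp_s \subsetneq \MM_s$ means precisely that, in the dynamical plane of any $c \in \intr\MMp_s$ (and more generally $c \in \intr\YYp^0_1$), the little Julia set associated with $\ovl{\alpha'}$ is contained in the little Julia set associated with $\ovl\alpha$, together with all their forward orbits. Since the strips $\lfloor Y^p_1\rfloor$ and $\lfloor Y'^{p'}_1\rfloor$ are the combinatorial ``fences'' that enclose these little Julia sets (more precisely $Y'^{p'}_1$ is mapped to $Y'^0_1 \supset$ little Julia set of $\ovl{\alpha'}$ under $f^{p'}$, and similarly for the unprimed objects), the containment of little Julia sets forces the desired containment of the strips after the appropriate iterate.

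First I would make precise the correspondence between the two cycle-orbits. The rays $R(\ovl\alpha)$ form one or two cycles of period $p$; the $p$ sectors $f_c^k(\lfloor Y^p_1\rfloor)$, $0\le k<p$, have disjoint interiors (Property~\ref{property1}) and are ``attached'' along the orbit of the periodic point $\alpha_1$. The same holds for the primed objects with period $p'$. Because $\MMp_s \subset \MM_s$, the periodic orbit $\ovl{\alpha'}$ and its rays are obtained by a simple renormalization of $f_c^p$ restricted near the little Julia set; hence $p \mid p'$ is not needed, but each periodic ray pair of $R(\ovl{\alpha'})$ lies inside one of the sectors $f_c^k(\lfloor Y^p_1\rfloor)$, and the whole orbit of $\lfloor Y'^{p'}_1\rfloor$ is distributed among the sectors $f_c^k(\lfloor Y^p_1\rfloor)$. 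Concretely: for each $k' < p'$ the strip $f_c^{k'}(\lfloor Y'^{p'}_1\rfloor)$ is bounded by rays whose forward orbit never crosses the forward orbit of the rays bounding the unprimed strips (this is the non-crossing condition in Proposition~\ref{prop0.1}, valid since both families of rays land on actual periodic/preperiodic orbits and the critical value cannot cross them while $c \in \intr\YYp^0_1$), so $f_c^{k'}(\lfloor Y'^{p'}_1\rfloor)$ lies entirely in a single sector of the unprimed partition, and that sector must be $f_c^k(\lfloor Y^p_1\rfloor)$ for the unique $k$ determined by matching the little-Julia-set orbits.

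To pin down that $k$ and establish the \emph{strict} inclusion (interior), I would argue as follows. Apply $f_c^{p'-k'}$ to bring $f_c^{k'}(\lfloor Y'^{p'}_1\rfloor)$ to $\lfloor Y'^0_1\rfloor$, which contains the little Julia set $K'$ of the renormalization of period $p'$. Since $K' \subset K$ (the little Julia set of period $p$), and $K \subset \lfloor Y^0_1\rfloor$, tracking backwards along the orbit and using that $\lfloor Y^p_1 \rfloor \subsetneq \lfloor Y^0_1\rfloor$ (established by construction right after Proposition~\ref{prop1}), we get that $\lfloor Y'^0_1\rfloor$ sits inside the interior of $f_c^{k}(\lfloor Y^p_1\rfloor)$ for the corresponding $k$, because $\partial\lfloor Y'^0_1\rfloor$ consists of rays and an equipotential that are combinatorially interior to the unprimed strip (the primed periodic orbit is strictly inside, not on, the unprimed one, as $\MMp_s \subsetneq \MM_s$ is a \emph{proper} subcopy). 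Pulling this inclusion back $p'-k'$ steps along the (non-crossing) orbits preserves strict inclusion of interiors, giving \eqref{StripeIncl}.

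The main obstacle I anticipate is bookkeeping the index matching $k' \mapsto k$ cleanly and justifying that the pullback of a strict interior inclusion remains a strict interior inclusion — i.e.\ that no boundary ray of the primed strip can accidentally land on a boundary ray of the unprimed strip at an intermediate iterate. This is exactly where one invokes that the renormalization is \emph{simple} (no crossed renormalization, as stipulated in the terminology section) and that $c \in \intr\YYp^0_1$ so all the relevant rays land and move holomorphically without the critical value crossing any of the forward orbits involved. Once that non-degeneracy is in place, the inclusion of strips is a formal consequence of the inclusion of little filled Julia sets, which is itself the content of $\MMp_s \subsetneq \MM_s$.
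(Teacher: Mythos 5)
Your argument rests on the same key fact as the paper's proof, namely $Y'_1\subset\intr Y_1$ (which encodes $\MMp_s\subsetneq\MM_s$), and on the same index matching $k\equiv k'\pmod p$ obtained by tracking the orbit of the small Julia set. But the logic runs in the opposite direction, and that is where your version has a gap. The paper first places the vertices $x_1,x_2$ of $\lfloor Y'^{p'}_1\rfloor$ in the little Julia set $K\subset Y^p_1$, which immediately gives $f^{k'}_c(\{x_1,x_2\})\subset\intr f^k_c(\lfloor Y^p_1\rfloor)$, and then excludes any boundary crossing by a one-line contradiction: push a hypothetical intersection point forward $p'-k'$ iterations, so the primed strip maps to $Y'_1$ while the unprimed boundary maps into $\partial Y_1$, contradicting $Y'_1\subset\intr Y_1$. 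Forward images of intersections are intersections, so no branch bookkeeping is needed.

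You instead try to pull the level-zero inclusion back $p'-k'$ steps, and you yourself flag the obstacle: one must show that at every intermediate iterate the relevant preimage branch of the primed strip falls inside the correct preimage branch of the unprimed strip rather than straddling it, i.e.\ that no boundary ray of $f^{k'}_c(\lfloor Y'^{p'}_1\rfloor)$ ever meets a boundary ray of $f^k_c(\lfloor Y^p_1\rfloor)$. The appeal to ``simplicity of renormalization'' and ``non-crossing of rays'' gestures at this but does not establish it; in a pullback argument one has to rule out that a preimage of a primed boundary ray shares a landing point with a preimage of an unprimed boundary ray, and that is precisely the content of what is being proved, not something that follows from the generalities you cite. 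The paper's push-forward contradiction dispatches this in one step. So your proposal identifies the right ingredients but leaves the essential non-crossing step unproven; replacing the backward pullback with the forward contradiction closes the gap and is exactly what the paper does.
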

\begin{proof}
Let $x_1$ and $x_2$ be the two landing points of the two ray pairs
bounding $\lfloor Y'^{p'}_1 \rfloor$. These belong to the ``small
Julia set'' associated with $\MM_s$ (for the parameter $c_0$), hence
$\{x_1,x_2\}\subset Y^p_1$.

Then for any $k'$ there exists a $k\in\{0,1,\dots,p-1\}$ (in fact,
$k\equiv k' \pmod p$) such that $f^{k'}_c\left(\{x_1,x_2\}
 \right)\subset \intr  f^{k}_c\left(\lfloor Y^{p}_1\rfloor
 \right)$.

We need to show that $f^{k'}_c\left(\lfloor Y'^{p'}_1\rfloor
 \right)$ does not intersect
 $f^{k}_c\left(\partial\lfloor Y^{p}_1\rfloor
 \right)$ if $k'<p'$.
Recall that we have the following proper surjective maps (of degree
$2$):
$$f_c^{p}:\ \lfloor Y^{p}_1 \rfloor\rightarrow Y_1,$$
$$f_c^{p'}:\ \lfloor Y'^{p'}_1 \rfloor\rightarrow Y'_1,$$
where $Y'_1\subset\intr Y_1$. Therefore if the above statement is
not true, then $Y'_1$ would have non trivial intersection with
$\partial Y_1$.
\end{proof}

We will refer to decorations associated to $\MM_s$ as \emph{primary
decorations},
  and to those associated to $\MM'_s$
as \emph{secondary decorations} (both in dynamical and in parameter
spaces),
  and similarly for puzzle pieces.
In addition, we need another collection of puzzle pieces associated
with $\MM'_s$ that will be denoted $\widehat{Z^n_i}$.

\begin{defprop}[The Fundamental Annulus]
\label{defprop1}
   For every $c\in\YY'^1_0$, there exists a puzzle piece $\widehat{Z^0_0}\in F'$ of depth $0$
   the interior of which contains $Z^0_0$.
For $k'<p'$ we have $\intr\widehat{Z^0_0}\cap f^{k'}_c(\lfloor
Y'^{p'}_1\rfloor) = \emptyset$. The boundary of the annulus
$\widehat{Z^0_0}\setminus
   Z^0_0$ depends holomorphically on $c$.

\end{defprop}

Note that $\widehat{Z^0_0}$ is inside $Y_1$ but may or may not be
inside $Y'_1$. In fact, $\widehat{Z^0_0}$ is inside $Y'_1$ if and
only if $\widehat{Z^0_0}=Z'^0_0$ (see Figure~\ref{Fig:Strips}).

\begin{proof}
We need to show that for any puzzle piece $X'\in F'$ of depth $0$,
we have $\partial X'\cap Z^0_0=\emptyset$. But any $X'\in F'$ at
depth $0$ is truncated by
  the equipotential of greater height than $Z^0_0$ (height $1$ for
$F'$ and height $1/2$ for $F$). Therefore it is enough to verify
that
  if a ray $R'$ bounds $X'$,  then $R'$ does not intersect
  $Z^0_0$. Note that $R'$ belongs to $\partial f^{k'}_c\left(\lfloor Y'^{p'}_1\rfloor
 \right)$ for some $k'< p'$, where $f^{k'}_c\left(\lfloor Y'^{p'}_1\rfloor
 \right)\subset \intr f^{k}_c\left(\lfloor Y^{p}_1\rfloor
 \right)$ for some $k<p$ (see Proposition~\ref{Prop:SubStrip}). Hence $R'$ does not
 intersect $Z^0_0$ (see Property~\ref{property1}); this proves the existence
 of $\widehat{Z^0_0}$. Holomorphic dependence of the boundary is clear as long as
 all pieces exist, which is the case for all $c\in \YYp^0_1$.

For $k'<p'$ the sets $f^{k'}_c(\lfloor Y'^{p'}_1\rfloor)$ are
untruncated puzzle pieces in $F'$ of depth $0$ by construction, and
so is $\lfloor \widehat{Z^0_0}\rfloor$. But $f^{k'}_c(\lfloor
Y'^{p'}_1\rfloor)$ do not intersect $Z^0_0$ as we just showed, so
these puzzle pieces have disjoint interiors.
\end{proof}

By definition, any $Z^n_i$ is a conformal pullback of $Z^0_0$ under
a branch of $(f^n_c)^{-1}$. It may or may not be possible to
conformally pull back the larger domain  $\widehat{Z^0_0}$ along the
same orbit (i.e., choosing the same preimage branches). If it is, we
denote the puzzle piece thus obtained by $\widehat{Z^n_i}\supset
Z^n_i$. It is clear that $\widehat{Z^n_i}\in F'$.

The following proposition illustrates the relation of puzzle and
parapuzzle pieces associated with $\MM_s$ and $\MMp_s$:

\begin{proposition}
\label{prop1.0}
  For any $Z^n_i$ and any $c\in \intr \YYp^0_1$, the following properties hold:
\begin{enumerate}
\item
if $Z^n_i$ and $\intr Y'^{p'}_1$ intersect, then $Z^n_i\subset \intr
Y'^{p'}_1$;
\item
if $Z^n_i$ and $Z'^m_j$ intersect with $m\le n$, then $Z^n_i\subset
\intr Z'^m_j$; in addition, if $\widehat{Z^n_i}$ exists, then
$\widehat{Z^n_i}\subset Z'^m_j$;
\item
if $Z^n_i$ and $Z'^m_j$ intersect within $Y'^{p'}_1$, then $m\le n$
and $Z^n_i\subset \intr Z'^m_j$;
\item
if $c=c_0$, then $\widehat{Z^n_i}$ exists and is contained in some
$Z'^m_j$ with $m\le n$.
\end{enumerate}

\end{proposition}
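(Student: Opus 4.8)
The plan is to exploit the nesting structure of puzzle pieces (Property~1 of Section~\ref{sec1}) together with the fact that the boundary rays of the pieces $Z'^m_j \in F'$ and $\lfloor Y'^{p'}_1 \rfloor$ all sit inside the strips $f^k_c(\lfloor Y^p_1\rfloor)$, whose interiors are pairwise disjoint and miss $Z^0_0$ (Property~\ref{property1} and Proposition~\ref{Prop:SubStrip}). The key mechanism is this: each $Z^n_i$ is a conformal pullback of $Z^0_0$ along a branch of $(f^n_c)^{-1}$, and $Z^0_0$ is disjoint from every boundary ray of the $F'$-pieces of depth $0$ (this is exactly what was shown in the proof of Definition and Proposition~\ref{defprop1}). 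Pulling this disjointness back along the orbit of $Z^n_i$, and using that $F'$-pieces of depth $\le n$ have boundaries that are preimages of $F'$-boundaries of depth $0$, one gets that $\partial Z^n_i$ does not cross $\partial Z'^m_j$ for $m \le n$ — hence they are either nested or interior-disjoint, and ``intersect'' forces nesting.

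\emph{Step-by-step.} First I would prove (1): the strip $Y'^{p'}_1$ is an (untruncated) puzzle piece of $F'$ of depth $p'$ bounded by the ray pairs at $R^{\phi'_1},R^{\phi'_2}$ and $R^{\psi'_1},R^{\psi'_2}$. By Property~\ref{property1} applied to $\MM'_s$ inside $\YY'^0_1$, and by the argument of Definition and Proposition~\ref{defprop1}, the rays bounding $f^{k'}_c(\lfloor Y'^{p'}_1\rfloor)$ for $k' < p'$ lie in $f^k_c(\lfloor Y^p_1\rfloor)$ and so avoid $Z^0_0$; pulling back along the defining orbit of $Z^n_i$ shows $\partial Y'^{p'}_1$ cannot enter the interior of $Z^n_i$. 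Since $Z^n_i$ is connected and (by assumption) meets $\intr Y'^{p'}_1$, it must lie entirely inside. Second, for (2): both $Z^n_i$ (as a conformal pullback of $Z^0_0$) and $Z'^m_j$ are puzzle pieces, but in different families; however, by Property~\ref{prop0.2} and the fact that $\widehat{Z^0_0} \in F'$ has depth $0$ and contains $Z^0_0$, each $Z^n_i$ is contained in some depth-$n$ piece of $F'$, and if $\widehat{Z^n_i}$ exists it \emph{is} that piece. Nesting within a single family ($F'$) then gives the dichotomy: if the depth-$n$ $F'$-piece through $Z^n_i$ meets $Z'^m_j$ with $m \le n$, it is contained in $Z'^m_j$; combined with continuity of boundaries (all pieces exist for $c \in \intr \YY'^0_1$) and the fact that $Z^n_i$ is in the interior of its $F'$-hull (since $Z^n_i$ is bounded by lower equipotential, height $1/2^{?}$, while $\widehat{Z^0_0}$ and its pullbacks are not), one upgrades containment to $Z^n_i \subset \intr Z'^m_j$.

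Third, (3) follows by combining (1) and (2): if $Z^n_i$ and $Z'^m_j$ meet inside $Y'^{p'}_1$, then $Z^n_i \subset \intr Y'^{p'}_1$ by (1), and the depth count $m \le n$ is forced because a depth-$m$ piece of $F'$ that properly meets $Z^n_i \subset Y'^{p'}_1$ and has $m > n$ would be strictly contained in $Z^n_i$, contradicting that $Z'^m_j$ also meets the complement of $Z^n_i$ (or else, if $Z'^m_j \subseteq Z^n_i$, then since $f^n_c$ maps $Z^n_i$ conformally onto $Z^0_0$ which is disjoint from all depth-$0$ $F'$-rays, we reach a contradiction with $Z'^m_j$ being bounded by such rays pulled back fewer than $n$ times); hence $Z^n_i \subset \intr Z'^m_j$ by (2). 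Finally, (4): at $c = c_0 \in \MM'_s$, the critical orbit never escapes $Y'^{p'}_1$, so the defining orbit of each $Z^n_i$ stays in the region where $\widehat{Z^0_0}$ can be pulled back conformally (the obstruction to pulling back $\widehat{Z^0_0}$ is the critical point landing in an intermediate piece, which is ruled out by infinite renormalizability keeping $0$ away from the relevant boundaries); thus $\widehat{Z^n_i}$ exists, and by (2) applied with $m = n$ it lies in some $Z'^m_j$, $m \le n$.

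\emph{Main obstacle.} The delicate point is (4) — establishing that $\widehat{Z^0_0}$ can actually be pulled back conformally along \emph{every} $Z^n_i$-orbit when $c = c_0$. This requires showing the critical value's orbit, while traversing the orbit that defines $Z^n_i$, never enters the ``collar'' $\widehat{Z^0_0} \setminus Z^0_0$ in a way that would create a critical point in an intermediate pullback; the right tool is that $c_0 \in \MM'_s$ forces $f^{k'}_{c_0}(0) \in Y'^{p'}_1$ for all $k'$ in a suitable sense, combined with the disjointness $\intr \widehat{Z^0_0} \cap f^{k'}_{c_0}(\lfloor Y'^{p'}_1 \rfloor) = \emptyset$ from Definition and Proposition~\ref{defprop1}. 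I expect this to need a careful induction on depth tracking where the critical orbit sits relative to the $F'$-pieces.
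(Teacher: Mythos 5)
Your treatment of items (1), (2), and (4) follows essentially the same route as the paper's: $Z^0_0\subset\intr\widehat{Z^0_0}$ (Definition and Proposition~\ref{defprop1}), pulled back along the defining orbit of $Z^n_i$, gives $Z^n_i\subset\intr X'^n$ for a depth-$n$ piece $X'^n\in F'$, and nesting inside the single family $F'$ then forces containment in $Z'^m_j$ for $m\le n$, with $\widehat{Z^n_i}=X'^n$ when it exists. (The paper proves (2) first and deduces (1) from it; you do (1) directly from the ray-disjointness, which is fine. For (4) both you and the paper give a short induction sketch.)

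However, your argument for the inequality $m\le n$ in item (3) has a genuine gap. You argue by contradiction: suppose $m>n$ and $Z^n_i$ and $Z'^m_j$ meet inside $Y'^{p'}_1$; you claim either (a) $Z'^m_j$ would have to be strictly contained in $Z^n_i$, contradicting that $Z'^m_j$ also meets the complement of $Z^n_i$, or (b) if $Z'^m_j\subseteq Z^n_i$, then $f^n_c(Z'^m_j)\subset Z^0_0$ would be bounded by depth-$0$ $F'$-rays, contradicting $Z^0_0\cap(\text{depth-}0\ F'\text{-rays})=\emptyset$. Neither half closes. For (a), the nesting dichotomy holds \emph{within} a single family $F'$: you know $Z^n_i\subset\intr X'^n$ and $m>n$ forces $Z'^m_j\subset X'^n$, but $X'^n$ is strictly larger than $Z^n_i$, so there is nothing preventing $Z'^m_j$ from straddling $\partial Z^n_i$ inside $X'^n$ — the boundary $\partial Z^n_i$ belongs to $F$, not $F'$, and a priori may cross $F'$-rays of depth $>n$. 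For (b), if $Z'^m_j\subseteq Z^n_i$ with $m>n$, then $f^n_c$ maps $Z'^m_j$ conformally onto a set bounded by $F'$-rays of depth $m-n>0$, not of depth $0$; positive-depth $F'$-rays may perfectly well lie inside $Z^0_0\subset\intr\widehat{Z^0_0}$, so there is no contradiction. The paper's actual argument is different and does close: it observes that $Y'^{p'}_1$ and $Z'^0_0$ are (up to truncation) depth-$0$ $F'$-pieces, and tracks the escape of $X'^n$ under the first return map $f^{p'}_c\colon\lfloor Y'^{p'}_1\rfloor\to Y'_1$. The depth $m$ is $p'$ times the escaping time, and since $f^n_c(Z^n_i)=Z^0_0$ is disjoint from $Y'^{p'}_1$ (by Definition and Proposition~\ref{defprop1}), the escape must occur within $n$ iterates, giving $m\le n$. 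You would need to replace your case analysis with an argument of this type — essentially you must use the renormalization return-map structure, not just abstract nesting.
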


\begin{proof}
 The set $Z^0_0$ is contained in a puzzle piece $\widehat{Z^0_0}$ of $F'$ of
depth $0$ (Proposition~\ref{defprop1}), so by induction $Z^n_j$ is
contained in a puzzle piece (say $X'^n$) of $F'$ of depth $n$. Hence
if it intersects a puzzle piece $Z'^m_j$ of depth $m\le n$, then it
is contained in that piece. If $\widehat{Z^n_i}$ exists, then it
equals $X'^n$ by construction. This proves (2).

Similarly, if $Z^n_i$ and $\intr Y'^{p'}_1$ intersect, then
$Z^n_i\subset \intr X'^n\subset \intr Y'^{p'}_1$; this is (1).

Let us now show that if $Z^n_i$ and $Z'^m_j$ intersect within
$Y'^{p'}_1$, then $m/p'$ is the ``escaping time'' for $Z^n_i$ under
$f_c^{p'}:\lfloor Y'^{p'}_1\rfloor \rightarrow Y'_1$, i.e., $m/p'$
is the least iterate so $(f^{p'}_c)^{m'/p'}(Z^n_i)\not\subset
\lfloor Y'^{p'}_1 \rfloor$; this will imply that $m\le n$.

Observe that, up to truncation, the two domains $Y'^{p'}_1$,
$Z'^0_0$ are puzzle pieces in $F'$ of depth $0$. By
Proposition~\ref{defprop1} we have $n\ge p'$, so the map $f_c^{p'}$
sends $X'^n$ to a puzzle piece (say $X'^{n-p'}$) in $Y'_1$. Either
$X'^{n-p'}$ is a subset of $Z'^0_0$ (in this case $m=p'$) or
$X'^{n-p'}$ is a subset of $Y'^{p'}_1$; in the second case we use
induction. This proves (3) (using (2)).

Assume that $c=c_0$ (so that $f_{c_0}$ is renormalizable with
respect to $\MMp_s$); thus all $Z'^m_j$ exist. Therefore if $Z^n_i$
is in $Y'^{p'}_1$, then $Z^n_i$ is in some $Z'^m_j$, where
$c_0\not\in Z'^m_j$. By induction on $n$ it is easy to prove that
$\widehat{Z^n_i}$ exists and has conformal pullbacks.
\end{proof}

If $c=c_0$, then for every $Z^n_i\subset Y'^0_1$ there is a $Z'^m_j$
so that $Z^n_i\subset\widehat{Z^n_i}\subset Z'^m_j$ (the depth $m$
is unique because of the condition that $Z'^m_j$ be maximal).
\textit{Let us fix $Z^n_i\subset Y'^0_1$ and the corresponding
$Z'^m_j$.}

We will distinguish the following two cases (depending on $c_0$):
the decoration ${Z^n_i}$ is
\begin{description}
\item[simple] if
$f_{c_0}^k\left(\lfloor \widehat{Z^n_i}\rfloor\right)\not\subset
\lfloor Z'^m_j\rfloor$ for all $k\in\{1,2,\dots,n\}$;

\item[unsimple] otherwise; i.e., if there is a $k\in\{1,2,\dots,n\}$ with
$f_{c_0}^k\left(\lfloor \widehat{Z^n_i}\rfloor\right)\subset \lfloor
Z'^m_j\rfloor$.
\end{description}

In the simple case, the proof will be based on the fact that
$\widehat{\ZZ^n_i}$ exists, which will provide a fundamental annulus
of uniform modulus. In the unsimple case, $Z^n_i$ is surrounded by
an annulus that is a conformal pullback of an annulus in
Lemma~\ref{lem4}.

\section{The Proof}

  \subsection{The Simple Case}

\label{sec2.3}

 The purpose of this
  section is to prove the following statement:

\begin{proposition}[Fundamental Annulus]
\label{prop2} Let $Z^n_i$ be puzzle piece of a simple decoration.
Then $\widehat{\ZZ^n_i}\subset \ZZ'^m_j\subset\intr \YYp^0_1$
exists, the annulus $\widehat{Z^n_i}\backslash Z^n_i$ exists and
moves holomorphically (with the base point $c_0$) in some open
Jordan disc $\JJ\subset \YY'^0_1$ containing $\ZZ'^m_j$, and the
quasiconformal
  dilatation of the holomorphic motion for any parameter $c\in \JJ$ is bounded in
  terms of the conformal distance of $c$ to the boundary of $\YY'^0_1$.
  \end{proposition}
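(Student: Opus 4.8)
The statement combines three assertions about a simple decoration $Z^n_i$: existence of the parameter puzzle piece $\widehat{\ZZ^n_i}$ inside $\ZZ'^m_j \subset \intr\YYp^0_1$; existence and holomorphic motion of the dynamical annulus $\widehat{Z^n_i}\setminus Z^n_i$ over a Jordan disc $\JJ$; and a dilatation bound in terms of the conformal distance to $\partial\YYp^0_1$. I'll prove these in that order, the key input being Proposition~\ref{defprop1} (the fundamental annulus $\widehat{Z^0_0}\setminus Z^0_0$ exists for $c\in\YYp^0_1$ and moves holomorphically) together with the simplicity hypothesis, which is precisely what guarantees the forward orbit of $\partial\widehat{Z^n_i}$ does not re-enter the relevant pieces prematurely.

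Let me sketch the plan. By Proposition~\ref{prop1.0}(4), at $c=c_0$ the piece $\widehat{Z^n_i}$ exists, has conformal pullbacks along the orbit defining $Z^n_i$, and sits inside a unique maximal $Z'^m_j$ with $m\le n$; by Proposition~\ref{prop1.0}(3) this $m/p'$ is the escaping time. First I would verify the combinatorial non-intersection condition: the forward orbit of $\partial\widehat{Z^n_i}$ does not meet $\intr\widehat{Z^n_i}$, so by Proposition~\ref{prop0.1} the relevant pieces are genuine puzzle pieces in a common family, and by Property~\ref{prop0.2} (applied with $\widehat{Z^0_0}\subset Y_1$, up to truncation) the pullback piece $\widehat{Z^n_i}$ has depth $n$ in $F'$. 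The simplicity hypothesis, $f_{c_0}^k(\lfloor\widehat{Z^n_i}\rfloor)\not\subset\lfloor Z'^m_j\rfloor$ for $1\le k\le n$, is what I expect to need to ensure the \emph{parameter} piece $\widehat{\ZZ^n_i}$ is well-defined: it says the critical orbit does not return to $Z'^m_j$ before time $n$, so the escaping-time formalism of Lemma~\ref{lem1} (applied to $F'$ and $\MMp_s$, with the $\widehat{Z^n_i}$ in place of the $Z^n_i$) produces a parapuzzle piece $\widehat{\ZZ^n_i}$ that corresponds to $\widehat{Z^n_i}$ via ``$c\in\widehat{\ZZ^n_i}$ iff $c\in\widehat{Z^n_i}$''. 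Since $\widehat{Z^n_i}\subset Z'^m_j$ and $Z'^m_j\subset Y'^0_1$ implies $\ZZ'^m_j$ exists (Subsection~\ref{sec2.2}), and the parameter pieces nest the same way as the dynamical ones, we get $\widehat{\ZZ^n_i}\subset\ZZ'^m_j\subset\intr\YYp^0_1$.

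**The holomorphic motion.** Next I would build the holomorphic motion of $\widehat{Z^n_i}\setminus Z^n_i$. The annulus $\widehat{Z^0_0}\setminus Z^0_0$ moves holomorphically over $\YYp^0_1$ (Proposition~\ref{defprop1}); take $\JJ$ to be the component of $\YYp^0_1\setminus\bigcup_{m'\le m-1}\bigcup_l\lfloor\ZZ'^{m'}_l\rfloor$ containing $\ZZ'^m_j$ — an open Jordan disc by Lemma~\ref{lem1} — so that over $\JJ$ the piece $Z'^m_j$, hence the branch of $(f^m_c)^{-1}$ landing in it, moves holomorphically and the critical value stays off the orbit of the boundary rays involved. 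Pulling back the motion of $\widehat{Z^0_0}\setminus Z^0_0$ along the chosen $(f^n_c)^{-1}$-branch (which remains conformal throughout $\JJ$ because the relevant pieces stay disjoint from the critical value, using the simplicity hypothesis to exclude premature returns) yields a holomorphic motion of $\widehat{Z^n_i}\setminus Z^n_i$ based at $c_0$. The dilatation estimate is then the standard one: conformal pullback preserves the holomorphic motion, so by the $\lambda$-lemma the dilatation of the motion at parameter $c\in\JJ$ is controlled by the hyperbolic distance from $c$ to $c_0$ inside $\JJ$, and since $\JJ\subset\YYp^0_1$ with $c_0$ fixed, this is bounded in terms of the conformal (hyperbolic) distance of $c$ to $\partial\YYp^0_1$.

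**The main obstacle.** The genuinely delicate point is the conformality of the pullback of $\widehat{Z^0_0}\setminus Z^0_0$ all the way over $\JJ$ — equivalently, that no critical-orbit point enters the interiors of the intermediate pieces $f_c^k(\widehat{Z^n_i})$ for $0<k\le n$ as $c$ ranges over $\JJ$, not merely at $c=c_0$. At $c_0$ this is the simplicity hypothesis plus Proposition~\ref{prop1.0}; extending it to all of $\JJ$ requires that $\JJ$ be cut out exactly by the parapuzzle pieces $\lfloor\ZZ'^{m'}_l\rfloor$ of depths $<m$, so that the escaping-time combinatorics is locked. I would argue this the way such facts are proved in parapuzzle theory (as in \cite{ALS}): the combinatorial type of $Z^n_i$ — in particular which $Z'^m_j$ contains it and whether it is simple — is constant on $\JJ$ because crossing it would force the critical value across a boundary ray of some $\ZZ'^{m'}_l$ with $m'<m$, contradicting $c\in\JJ$. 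Making this airtight, rather than the holomorphic-motion and dilatation bookkeeping which is routine, is where the real work lies.
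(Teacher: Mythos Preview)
Your overall strategy matches the paper's: extend the holomorphic motion of the boundary of $\widehat{Z^0_0}\setminus Z^0_0$ over $\intr\YYp^0_1$ via the $\lambda$-lemma (so that the dilatation is governed by distance to $\partial\YYp^0_1$), then pull it back conformally $n$ times to obtain the motion of $\widehat{Z^n_i}\setminus Z^n_i$, and use simplicity to check that $\widehat{\ZZ^n_i}\subset\ZZ'^m_j$. One small point: the dilatation bound must come from applying the $\lambda$-lemma over $\YYp^0_1$ \emph{before} the conformal pullback, not over $\JJ$ afterwards; since $\JJ\subset\YYp^0_1$, hyperbolic distance in $\JJ$ dominates that in $\YYp^0_1$, so your stated implication runs the wrong way. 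This is easily repaired.

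The genuine gap is your choice of $\JJ$. You take $\JJ$ to be the component of $\intr\YYp^0_1\setminus\bigcup_{m'<m}\bigcup_l\lfloor\ZZ'^{m'}_l\rfloor$ containing $\ZZ'^m_j$. This controls only the last $m$ pullback steps, those tracking the orbit $Z'^m_j\to Z'^{m-1}_{\cdot}\to\cdots\to Z'^0_0$. But the full pullback has $n$ steps, and after $f_{c_0}^m(\widehat{Z^n_i})$ lands in $Z'^0_0$ the orbit may later return to $Y'^0_1$: for such return times $k\in\{m+1,\dots,n\}$ one has $f_{c_0}^k(\widehat{Z^n_i})\subset Z'^{m_k}_{j_k}$ with $m_k\le n-k$, and nothing forces $m_k<m$. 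Simplicity only rules out $(m_k,j_k)=(m,j)$, not $m_k\ge m$. For parameters $c\in\ZZ'^{m_k}_{j_k}$ with $m_k\ge m$ (which your $\JJ$ does not exclude) the critical value lies in $Z'^{m_k}_{j_k}\supset\widehat{Z^{n-k}_{\cdot}}$, and the conformal branch of $f_c^{-1}$ at that step need not exist. Your ``locked escaping-time'' heuristic applies to the secondary escaping time $m$, not to the full primary depth $n$.

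The paper's remedy is to track the entire forward orbit: set $T=\{k\in\{1,\dots,n\}: f_{c_0}^k(\widehat{Z^n_i})\subset Y'^0_1\}$, record for each $k\in T$ the secondary piece $Z'^{m_k}_{j_k}\supset f_{c_0}^k(\widehat{Z^n_i})$, and define
\[
\JJ=\intr\YYp^0_1\setminus\bigcup_{k\in T}\lfloor\ZZ'^{m_k}_{j_k}\rfloor.
\]
Simplicity then says exactly that $(m,j)\notin\{(m_k,j_k):k\in T\}$, so the disjoint secondary parapuzzle pieces give $\ZZ'^m_j\subset\JJ$; and by construction every intermediate piece $\widehat{Z^{n-k}_{\cdot}}$ lying in $Y'^0_1$ has its associated parapuzzle piece removed, so the $n$-step pullback is conformal throughout $\JJ$.
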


  The main point of this proposition is that the quasiconformal
  dilatation of the motion is bounded in terms of the distance
  to the boundary of $\YY'^0_1$ (not $\JJ$).

  \begin{proof}
  From the construction we see that the boundary of the annulus
  $\widehat{Z^0_0}\setminus
  Z^0_0$ moves holomorphically whenever $c\in \intr\YY'^0_1$ (Proposition~\ref{defprop1}). By the
  $\lambda-$lemma we can extend this motion to a holomorphic motion $h_c$ of the closure of the annulus itself,
  and so that the quasiconformal dilatation is bounded above in terms of the
distance to the boundary of
   $\YY'^0_1$. Whenever $\tilde c\not\in
   f_{\tilde c}^k\left(\lfloor\widehat{Z^n_i}\rfloor\right)$ for all
$k=1,\ldots n$, then $\tilde c$
   has a neighborhood so that for all $c$ from this neighborhood we
can pull back
   the holomorphic motion $h_c$ along the orbit
$\widehat{Z^n_i},f_c\left(\widehat{Z^n_i}\right),\ldots ,
   f_c^n\left(\widehat{Z^n_i}\right)=\widehat{Z^0_0}$ by the formula
$h^n_c=f_c^{-n}\circ h_c\circ
   f_{\tilde c}^n$. Such a pull-back does not change the quasiconformal
   dilatation.

Define $T:=\left\{k\in\{1,\dots,n\}\colon
f_{c_0}^k\left(\widehat{Z^n_i}\right)\subset Y'^0_1\right\}$. For
each $k\in T$, there exist $m_k\le n-k$ and $j_k$ so that
$f_{c_0}^k\left(\widehat{Z^n_i}\right)=\widehat{Z^{n-k}_{i'}}\subset
Z'^{m_k}_{j_k}$ for some $i'$ (Proposition~\ref{prop1.0} (4)).

Recall from Lemma~\ref{lem1} that all parapuzzle pieces $\ZZ^n_i$
exist and, similarly, all $\ZZ'^m_j$ exist and are contained in
$\intr\YYp^1_0$. Define
\[
\JJ:=\intr \YY^0_1\setminus \bigcup_{k\in T}\lfloor
   \ZZ'^{m_k}_{j_k} \rfloor.
\]
By the definitions of $(m_k,j_k)$ and of simplicity, we have
$(m,j)\neq(m_k,j_k)$ for all $k\in T$; therefore, $\ZZ'^m_j\subset
\JJ$ (because all secondary decoration parapuzzle pieces
$\ZZ'^{m'}_{j'}$are disjoint).

Since for the parameter $c_0$, we have
$Z^n_i\subset\widehat{Z^n_i}\subset Z'^m_j$ by
Proposition~\ref{prop1.0} (4), it follows that
 $\ZZ^n_i\subset  \ZZ'^m_j\subset \JJ$.

The set $\JJ$ has been constructed so that for all parameters
$c\in\JJ$, the dynamical puzzle piece $\widehat Z^n_i$ exists and
depends holomorphically on $c$ (those parameter puzzle pieces for
which problems could occur are exactly the ones that are removed in
the definition of $\JJ$). But $Z^n_i\subset \widehat{Z^n_i}$, so
$Z^n_i$ also moves holomorphically, and thus also the annulus
$\widehat{Z^n_i} \setminus   Z^n_i$. Observe that the quasiconformal
dilatation was introduced only for the annulus
$\widehat{Z^0_0}\setminus Z^0_0$; everywhere else a conformal
preimage of this dilatation was used.

By definition, when $c\in \ZZ^n_i$, then the critical value $c$
    must be inside $Z^n_i\subset \widehat{Z^n_i}$; but since
$\ZZ'^m_j\subset\JJ$, there are parameters $c\in\JJ$ where the
critical value is not in $Z'^m_j\supset \widehat {Z^n_i}$.
Therefore, the dynamic rays that land together and form the boundary
of $\lfloor \widehat{Z^n_i} \rfloor$ have counterparts in parameter
space at the same angles, and these form the boundary of
$\lfloor\widehat{\ZZ^n_i}\rfloor$.

Therefore the parapuzzle piece $\widehat{\ZZ^n_i}$ exists and is
compactly contained in $\JJ\subset\YYp^0_1$.
\end{proof}

\begin{cor}[Annuli in Parameter Space, Simple Case]
\label{Cor:ParaAnnuli} The sets $\widehat{\ZZ^n_i}\setminus \ZZ^n_i$
(for all $n$ and $i$ that correspond to simple cases) are
non-degenerate annuli within $\ZZ'^m_j$, and their moduli are
bounded below in terms of their distance to $\partial\YY'^0_1$.
\end{cor}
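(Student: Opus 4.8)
The plan is to deduce the corollary directly from Proposition~\ref{prop2} by transporting the dynamical annulus $\widehat{Z^n_i}\setminus Z^n_i$ to parameter space via the holomorphic motion $h^n_c$ constructed there, and then invoking a standard parameter–dynamics comparison of moduli. First I would fix a simple decoration and the associated data $(m,j)$, the Jordan disc $\JJ\subset\YY'^0_1$, and the holomorphic motion of the closed annulus $\widehat{Z^n_i}\setminus Z^n_i$ over $\JJ$ with base point $c_0$, all of which are provided by Proposition~\ref{prop2}; in particular the quasiconformal dilatation of this motion, for $c\in\JJ$, is bounded purely in terms of the conformal (hyperbolic) distance of $c$ to $\partial\YY'^0_1$. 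The key point is that the bound is in terms of the boundary of $\YY'^0_1$, not of $\JJ$, so it is \emph{uniform} over all simple decorations whose parapuzzle pieces lie in a fixed compact part of $\YY'^0_1$.

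Next I would establish non-degeneracy. Since $Z^n_i\subset\intr\widehat{Z^n_i}$ for $c=c_0$ (Proposition~\ref{prop1.0}(4) and the definition of $\widehat{Z^n_i}$), the dynamical annulus is non-degenerate at the base point, hence non-degenerate — with quasiconformally comparable modulus — for every $c\in\JJ$, because a holomorphic motion of bounded dilatation cannot collapse a non-degenerate annulus. Then I would identify the parameter annulus: as shown at the end of the proof of Proposition~\ref{prop2}, the dynamic rays landing together on $\partial\lfloor\widehat{Z^n_i}\rfloor$ and on $\partial\lfloor Z^n_i\rfloor$ have parameter counterparts at the same angles, bounding $\lfloor\widehat{\ZZ^n_i}\rfloor$ and $\lfloor\ZZ^n_i\rfloor$ respectively; truncating at the correct equipotential heights, $\widehat{\ZZ^n_i}\setminus\ZZ^n_i$ is a genuine annulus compactly contained in $\ZZ'^m_j\subset\JJ$, and $\ZZ^n_i\subset\ZZ'^m_j$ by Proposition~\ref{prop2}.

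The quantitative comparison is the heart of the argument: the boundary of $\ZZ^n_i$ is the ``wake'' of the parameter values for which the critical value $c$ lies in the dynamical piece $Z^n_i$, so the parameter annulus $\widehat{\ZZ^n_i}\setminus\ZZ^n_i$ and the dynamical annulus $\widehat{Z^n_i}\setminus Z^n_i$ (for any fixed parameter in the region) are related by the map $c\mapsto c$ composed with the holomorphic motion trivializing $\widehat{Z^n_i}\setminus Z^n_i$; by the standard parapuzzle principle (the same one underlying Lemma~\ref{lem1}, cf.\ \cite{ALS}), this forces $\operatorname{mod}\bigl(\widehat{\ZZ^n_i}\setminus\ZZ^n_i\bigr)$ to be comparable to $\operatorname{mod}\bigl(\widehat{Z^n_i}\setminus Z^n_i\bigr)$ up to a constant depending only on the dilatation of the motion, hence only on the distance to $\partial\YY'^0_1$. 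Combining this with the uniform lower bound on the dynamical modulus (itself a conformal pullback of the fixed annulus $\widehat{Z^0_0}\setminus Z^0_0$, whose modulus is a fixed positive constant) yields the claimed lower bound on $\operatorname{mod}\bigl(\widehat{\ZZ^n_i}\setminus\ZZ^n_i\bigr)$ in terms of the distance to $\partial\YY'^0_1$.

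I expect the main obstacle to be making the parameter–dynamics modulus comparison rigorous and genuinely uniform: one must check that the holomorphic motion $h^n_c$ over $\JJ$ restricts appropriately near $\partial\ZZ^n_i$ and that the ``diagonal'' identification of the parameter annulus with the dynamical annulus is a quasiconformal map with controlled dilatation, rather than merely asserting it. This is exactly where the subtlety of Proposition~\ref{prop2} — the dilatation bounded in terms of $\partial\YY'^0_1$ and not $\partial\JJ$ — is used, since $\ZZ'^m_j$ (and hence $\ZZ^n_i$) may be deep inside $\JJ$ but still well inside $\YY'^0_1$. Once this comparison is in place, the rest is a routine application of the pullback-invariance of moduli and the $\lambda$-lemma.
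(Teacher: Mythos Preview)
Your proposal is correct and follows essentially the same approach as the paper: both derive the corollary directly from Proposition~\ref{prop2}, note that the dynamical annulus $\widehat{Z^n_i}\setminus Z^n_i$ is a conformal pullback of the fixed annulus $\widehat{Z^0_0}\setminus Z^0_0$, and then invoke the standard parapuzzle comparison (the paper cites \cite{ALS} and \cite{L2}) to transfer the modulus bound to parameter space, using precisely the fact that the dilatation of the holomorphic motion is controlled by the distance to $\partial\YY'^0_1$ rather than to $\partial\JJ$. Your write-up is more detailed than the paper's terse proof, but the logical structure and the key inputs are identical.
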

\begin{proof}
If $Z^n_i$ exists and corresponds to a simple decoration, then
$\widehat{\ZZ^n_i}$ and $\ZZ^n_i$ exist by Proposition~\ref{prop2}.
We have $Z^n_i\subset \widehat{Z^n_i}$ by construction, and their
boundaries are disjoint. Thus $\widehat{\ZZ^n_i}\setminus \ZZ^n_i$
is a non-degenerate annulus. Its modulus is at least as big as the
modulus of $\widehat{Z^n_i}\sm Z^n_i$ for the parameter $c_0$, up to
the quasiconformal distortion between the $\widehat{Z^n_i}\sm Z^n_i$
for the various parameters $c$ (see \cite{ALS} or \cite{L2}), and
this depends only on distance of $\widehat{\ZZ^n_i}$ to the boundary
of $\YYp^0_1$.
\end{proof}

\begin{theorem}[The Decoration Theorem, Simple Case]
Suppose that $\LL_\nu$ is a sequence of decorations of $\MM_s$ and
$c_0$ is a limiting parameter of them. If all these decorations are
simple (with respect to $c_0$), then diameters of $\LL_\nu$ tend to
zero.
\end{theorem}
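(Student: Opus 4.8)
The plan is to derive the statement directly from the annuli produced in Corollary~\ref{Cor:ParaAnnuli}. For each simple decoration $\LL_\nu$ with tip $a_\nu$, recall from Section~\ref{CombinSect} that $\LL_\nu$ is one of the $q-1$ decorations $\LL_{i,1},\dots,\LL_{i,q-1}$ grouped around a common Misiurewicz tip $a_i$, and the whole group lies inside the parapuzzle piece $\ZZ^n_i$ for the corresponding escaping time $n=n(i)$. So it suffices to show that the diameters of the $\ZZ^n_i$ containing these simple decorations tend to $0$. First I would set $c_0$ to be the accumulation parameter and recall (from the discussion preceding Proposition~\ref{prop2}, together with the Proposition's construction) that $c_0\in\MMp_s$ and that for each such $\ZZ^n_i$ we have $\ZZ^n_i\subset\widehat{\ZZ^n_i}\subset\ZZ'^m_j\subset\intr\YYp^0_1$, with $\widehat{\ZZ^n_i}\setminus\ZZ^n_i$ a nondegenerate annulus whose modulus is bounded below in terms of the distance $\delta(\widehat{\ZZ^n_i},\partial\YYp^0_1)$.

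Next I would pass to a subsequence: suppose for contradiction that $\operatorname{diam}\LL_\nu\ge 2\eps$ for infinitely many $\nu$, and recall that by the reduction in Section~\ref{CombinSect} we chose $c_0$ so that $|b_\nu-c_0|<\eps/2$ with $b_\nu\in\LL_\nu$; in particular the parapuzzle pieces $\ZZ^{n(\nu)}_{i(\nu)}$ carrying these decorations have diameters bounded below and accumulate at $c_0$, which lies in their closures (indeed $c_0\in\MMp_s$ is a definite distance from $\partial\YYp^0_1$, or one can simply note $c_0$ is interior to $\YYp^0_1$). Since $c_0\in\intr\YYp^0_1$ and all $\ZZ^n_i\subset\YYp^0_1$, and since $\widehat{\ZZ^n_i}$ is a definite Euclidean distance away from $\partial\YYp^0_1$ once we restrict to pieces sufficiently close to $c_0$, the conformal distance $\delta(\widehat{\ZZ^n_i},\partial\YYp^0_1)$ is uniformly bounded below for all $\nu$ large; hence by Corollary~\ref{Cor:ParaAnnuli} the moduli $\operatorname{mod}(\widehat{\ZZ^n_i}\setminus\ZZ^n_i)$ are uniformly bounded below, say by $\mu_0>0$.

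Now I would invoke the standard Gr\"otzsch/Teichm\"uller-type estimate: a nondegenerate annulus of modulus $\ge\mu_0$ surrounding a continuum $K$ inside a bounded set $\widehat{\ZZ^n_i}$ forces $\operatorname{diam}K$ to be at most $C(\mu_0)\cdot\operatorname{diam}\widehat{\ZZ^n_i}$ with $C(\mu_0)<1$ depending only on $\mu_0$; iterating along a chain of such nested annuli (or using that $\operatorname{diam}\ZZ^n_i\cdot e^{2\pi\mu_0}\le$ a fixed multiple of $\operatorname{diam}\widehat{\ZZ^n_i}\le\operatorname{diam}\YYp^0_1$) would already give $\operatorname{diam}\ZZ^n_i\le\operatorname{diam}\YYp^0_1\cdot e^{-2\pi\mu_0}$, a uniform bound but not yet a decay. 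To get decay to zero I would use that the $\ZZ^n_i$ are nested in ever-deeper secondary parapuzzle pieces (or: the escaping times $n(\nu)\to\infty$, so one can stack a growing number of disjoint annuli of modulus $\ge\mu_0$ between $\ZZ^{n(\nu)}_{i(\nu)}$ and a fixed piece), whence $\operatorname{diam}\ZZ^{n(\nu)}_{i(\nu)}\to 0$, contradicting the lower bound $\ge 2\eps$ on $\operatorname{diam}\LL_\nu\le\operatorname{diam}\ZZ^{n(\nu)}_{i(\nu)}$.

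The main obstacle is the last step: producing genuine \emph{decay} rather than just a single uniform bound. The single annulus $\widehat{\ZZ^n_i}\setminus\ZZ^n_i$ is not by itself enough; one must argue that as the decorations approach $c_0$ their parapuzzle pieces are separated from a fixed reference piece by more and more disjoint annuli of definite modulus—either by iterating the construction of Proposition~\ref{prop2} at deeper renormalization levels, or by a compactness argument showing that only finitely many $\ZZ^n_i$ of a given combinatorial depth can have large diameter while all deeper ones are controlled by a fixed power of $e^{-2\pi\mu_0}$. Handling the interface with Lemma~\ref{lem1} (so that the $\ZZ^n_i$ are honestly holomorphically moving Jordan discs on the relevant parameter region) and verifying that the conformal-distance bound in Corollary~\ref{Cor:ParaAnnuli} genuinely stays away from $0$ along the whole subsequence are the technical points that need care.
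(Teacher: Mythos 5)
Your setup is correct up to the crucial final step, and you have correctly identified that a single annulus of uniformly bounded modulus only gives a uniform bound on $\operatorname{diam}\ZZ^n_i$, not decay. But the fix you propose (stacking more annuli, or iterating the construction at deeper renormalization levels) is not how the paper closes the argument, and it is not clearly viable as stated: the parapuzzle pieces $\ZZ^{n(\nu)}_{i(\nu)}$ along your sequence are pairwise disjoint, not nested, so there is no natural chain of nested separating annuli to stack, and Proposition~\ref{prop2} produces only one fundamental annulus $\widehat{\ZZ^n_i}\setminus\ZZ^n_i$ per piece.

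The missing ingredient is the observation that $c_0\notin\widehat{\ZZ^n_i}$. Indeed, $\widehat{\ZZ^n_i}\subset\ZZ'^m_j$ and, by Proposition~\ref{prop1.0}(4) and Lemma~\ref{lem1}, the secondary parapuzzle piece $\ZZ'^m_j$ does \emph{not} contain $c_0$. Combined with the fact that the $\ZZ^n_i$ accumulate at $c_0$, this is exactly what converts ``uniform bound'' into ``decay'': the annulus $\widehat{\ZZ^n_i}\setminus\ZZ^n_i$, of modulus $\ge\mu_0$, separates $\ZZ^n_i$ from the external point $c_0$, and since $\operatorname{dist}(\ZZ^n_i,c_0)\to 0$ along the sequence, the Teichm\"uller modulus estimate forces $\operatorname{diam}\ZZ^n_i\to 0$. (In effect, you are applying the separation estimate with $\widehat{\ZZ^n_i}$ as the outer bound, when you should apply it with $c_0$ as the nearby external reference point whose distance to $\ZZ^n_i$ tends to zero.) This is the whole of the paper's argument, and it requires no iterated construction. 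Your control of the conformal distance to $\partial\YYp^0_1$ is essentially right, though the paper phrases it more carefully: the $\ZZ'^m_j$ accumulate at the infinitely renormalizable parameter $c_0$, and $c_0$ is separated from $\partial\YYp^0_1$ by infinitely many parameter ray pairs, which also separate $\ZZ'^m_j$ from the wake boundary once $m$ is large.
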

\begin{proof}
Each decoration $\LL_\nu$ is contained in a primary parapuzzle piece
${\ZZ}^n_i$ (where $n=n(\nu)$ and $i=i(\nu)$), which in turn is
contained in a secondary parapuzzle piece ${\ZZ'}^m_j$, with
$c_0\not\in {\ZZ'}^m_j$. The ${\ZZ'}^m_j$ must accumulate at $c_0$
and thus cannot accumulate at the boundary of the wake of $\MM'_s$,
which is $\YY'^0_1$ ($c_0$ is infinitely renormalizable and thus
separated from $\partial \YY'^0_1$ by infinitely many parameter ray
pairs, and these ray pairs separate ${\ZZ'}^m_j$ from the wake
boundary for sufficiently large $m$).

For sufficiently large $\nu$, the moduli of the annuli
$\widehat{\ZZ^n_i}\setminus \ZZ^n_i$ are bounded uniformly for all
$\nu$ by Corollary~\ref{Cor:ParaAnnuli}. But
$c_0\not\in\ZZ'^m_j\supset \widehat{\ZZ^n_i}$, so the diameters of
the $\ZZ^n_i$ must tend to zero.
\end{proof}

This concludes the proof in the simple case.

  \subsection{Constructing More Annuli}

\label{sec2.4}

In the following lemma $X$ and $V$ may be associated to arbitrary
dividing periodic cycles:

\begin{lemma}
\label{lem4} In the dynamical plane of $z^{2}+c_{0}$ there are two
puzzle pieces $X$ and $V$ with the following properties:
\begin{itemize}
\item  $c_{0}\in V\Subset X\subset Y'^{0}_{1}$;
\item any iterated preimage of $X$ is either inside ${Z'^0_0}$
or has non empty intersection with it;

\item the parapuzzle pieces $\XX$ and $\VV$ corresponding to $X$ and $V$ exist;
\item there exists an $\varepsilon_{0}=\varepsilon_{0}(X,V)>0$ such that
any iterated preimage $V^{n}$ of $V$ which is inside $X$ satisfies:
\[
\mod(X\sm V^{n})\ge \varepsilon_{0}.
\]

\end{itemize}
\end{lemma}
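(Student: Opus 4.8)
The plan is to construct $X$ and $V$ as puzzle pieces associated to a \emph{third} small copy $\MM''_s$ sitting strictly inside $\MM'_s$ and containing $c_0$, so that the ``fundamental annulus'' type estimate applies to $X \sm V$ and all its deep preimages inside $X$. Concretely, since $z^2+c_0$ is infinitely renormalizable, choose a renormalization level deep enough that the associated dividing cycle $\ovl{\alpha''}$ has its rays, together with the auxiliary rays $R^{\psi''_1}, R^{\psi''_2}$, entirely inside $Y'^0_1$; set $X := \lfloor Y''^{p''}_1 \rfloor$ truncated appropriately (or a puzzle piece of $F''$ of depth $0$ containing $c_0$) and $V := $ the first return domain of the renormalization, i.e.\ the analogue of $Y''^{p''}_1$. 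By Proposition~\ref{prop1} and the construction of strips, $V \Subset X \subset Y'^0_1$ with $c_0 \in V$, which gives the first bullet. The parapuzzle pieces $\XX, \VV$ exist because $F''$ exists in $\YY''^0_1$ and $c_0$ is in the interior (this is the third bullet), and one records that $\XX, \VV$ are compactly contained in $\YY''^0_1 \subset \YY'^0_1$.

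The second bullet — that every iterated preimage of $X$ is inside $Z'^0_0$ or meets it — should be arranged by choosing $X$ small enough: since $X$ is a puzzle piece of the deep family $F''$ and $Z'^0_0$ is a fixed puzzle piece of $F'$, and $\MM''_s \subsetneq \MM'_s$, an argument parallel to Proposition~\ref{Prop:SubStrip} and Proposition~\ref{defprop1} shows the small Julia set of $\MM''_s$ lies inside $Y'^{p'}_1$, hence any piece of $F''$ of depth $0$ either lands inside $Z'^0_0$ after pulling back or already intersects it; one passes to preimages by Property~\ref{prop0.2}. Alternatively, one shrinks $X$ so that $\lfloor X \rfloor$ equals $Z'^0_0$ pulled back, forcing the inclusion directly. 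This bullet is really a bookkeeping condition ensuring later that the annuli $X \sm V^n$ can be transported to the $Z'^0_0$-region where they are needed.

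The fourth bullet is the heart of the lemma and the main obstacle: a \emph{uniform} modulus bound $\mod(X \sm V^n) \ge \eps_0$ over all preimages $V^n \subset X$. The strategy is the standard Yoccoz/Lyubich ``first return'' or ``principal nest'' argument. Each preimage $V^n \subset X$ is, by the second bullet's logic and the structure of $F''$, a pullback under some iterate $f_{c_0}^k$ that maps $V^n$ homeomorphically (in fact with the critical orbit avoiding the relevant pieces) onto $V$ or onto a component of a first-return domain; the annulus $X \sm V^n$ is then a conformal (or at worst bounded-degree) pullback of one of finitely many ``parent'' annuli. The candidate parent annuli are: $X \sm V$ itself, and the finitely many annuli obtained by pulling $X \sm V$ back one step under the proper degree-$2$ map $f_{c_0}^{p''} : \lfloor Y''^{p''}_1 \rfloor \to Y''_1$ (which only doubles the modulus when the critical point is outside, and otherwise can be handled by the Gr\"otzsch inequality applied to the two univalent halves). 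Because $c_0$ is infinitely renormalizable, the critical point never escapes the renormalization domain, so the relevant pullbacks are eventually all \emph{univalent} once we are below the renormalization level, and modulus is preserved; the finitely many exceptional pieces near the critical orbit contribute a positive lower bound since there are only finitely many of them. Taking $\eps_0$ to be the minimum of $\mod(X \sm V)$ and these finitely many values (possibly divided by a bounded factor from degree-$2$ steps) gives the bound.

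The step I expect to be genuinely delicate is verifying that the pullbacks realizing $V^n$ are \emph{univalent} (or of uniformly bounded degree) — i.e.\ that the critical value does not fall into the intermediate pieces along the orbit from $V^n$ up to $V$. This is where infinite renormalizability of $c_0$ is used essentially: the critical orbit stays trapped in the nested renormalization domains and therefore cannot interfere with pullbacks that take place ``outside'' at the level of $F''$. I would prove this by the same separation argument as in Property~\ref{property1} and Proposition~\ref{prop1.0}: the small Julia sets of the successive renormalizations are nested and disjoint from the puzzle pieces $Z'^m_j$ except at cycle points, so the critical value, being in every small Julia set, stays away from the interiors of the pieces we pull back along. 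Once univalence is established the modulus estimate is automatic, and the remaining bullets are immediate from the construction.
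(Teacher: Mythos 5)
Your overall scaffolding (take $X$ and $V$ to be deep renormalization domains around $c_0$, nested inside $Y'^{p'}_1$, so that bullets one through three come out more or less automatically) matches the paper's construction, and your discussion of the first three bullets is essentially correct.

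However, your treatment of the fourth bullet — the uniform modulus bound — has a genuine gap. You propose that $X\sm V^n$ is a ``conformal (or bounded-degree) pullback of one of finitely many parent annuli,'' taking the parents to be $X\sm V$ and its one-step pullbacks. This does not work, for two reasons. First, the iterated preimages of $X$ are \emph{not} nested inside $X$: by the very second bullet of the lemma, any iterated preimage of $X$ is near $Z'^0_0$, which is far from the renormalization domain. So if $V^n\subset X$ is a preimage of $V$ under $f_{c_0}^k$, then the corresponding preimage $X^k$ of $X$ containing $V^n$ is \emph{not} a subset of $X$, and the pullback of $X\sm V$ along that branch is not a subannulus of $X\sm V^n$. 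In other words, $X\sm V^n$ is simply not a pullback of $X\sm V$. Second, there are \emph{not} finitely many relevant configurations: when $V^n$ approaches the periodic vertex $\beta$ of $\partial X$, the orbit of $V^n$ can stay near $\beta$ for an arbitrarily large number of periods $k$ before escaping, and naively there is no lower bound on the modulus of $X\sm V^n$ in this regime.

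The paper's proof recognizes that this ``$V^n$ close to a vertex of $\partial X$'' case is exactly where the difficulty lies, and handles it with a specific geometric construction that your sketch does not anticipate. Away from the vertices, local connectivity (trivial fibers at the repelling periodic and preperiodic vertex points) gives a definite standoff distance from $\partial X$, hence a uniform modulus. Near the periodic vertex $\beta$, the paper constructs a strip $S$ between the rays landing at the dividing fixed point $\alpha$ of the small Julia set and its preimage $\alpha'$; there is then a fixed annulus $A\subset X$ (independent of $n$) with $S$ truncated in its bounded complementary component and $c_0$ in the unbounded one, and $A$ is chosen so that it pulls back conformally under $f_{c_0}^{qk}$ along the orbit of $V^n$. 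It is this annulus $A$ — not $X\sm V$ — that gets pulled back to produce a uniform-modulus annulus separating $V^n$ from $\partial X$. This is also why the paper insists on a gap of two renormalization levels between $X$ and $V$: room is needed to fit $S$ and $A$ between them. Without this idea, your fourth bullet does not close, so the proposal as written is incomplete at the key step.
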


\begin{proof}

Since $c_0$ is infinitely renormalizable, there are infinitely many
nested renormalization domains, and these are contained  in $\lfloor
Y'^{p'}_1\rfloor $ provided the level $N$ of the renormalization is
sufficiently large.

We shall prove that if $X$ and $V$ are two renormalization domains
around $c_0$ of levels $N$ and $N+2$ for sufficiently large $N$ and
truncated at sufficiently small heights, then all four properties
are satisfied. Both domains are bounded by two pairs of dynamic rays
and one equipotential; the landing points of these ray pairs will be
called the vertices of $\partial X$ or $\partial V$. One of the
vertices will be a periodic point, the other one preperiodic on the
same orbit.

Then the first condition is satisfied, and the second follows
because $Z'^0_0$ is bounded by a dynamic ray pair outside of the
secondary renormalization domain, as well as a fixed equipotential
(see Property~\ref{prop0.2} for illustration).

The third claim also follows by standard results.

The last claim is similar to \cite[Lemma~4.5]{L1}. We will give a
sketch of the argument.

Let $V^n$ be an iterated preimage of $V$ with $V^n\subset X$. For
every $\eta>0$ there is a $\delta(\eta)>0$ so that if $V^n$ has
distance at least $\eta$ to both vertices of $\partial X$, then
$V^n$ must have distance at least $\delta(\eta)$ from $\partial X$
(see Figure~\ref{Fig:LyubichLemmaPart1} and its caption). This
implies that $\bmod(X\setminus V^n)\ge \eps_1(X,V)>0$.

\begin{figure}
\includegraphics[width=0.4\textwidth]{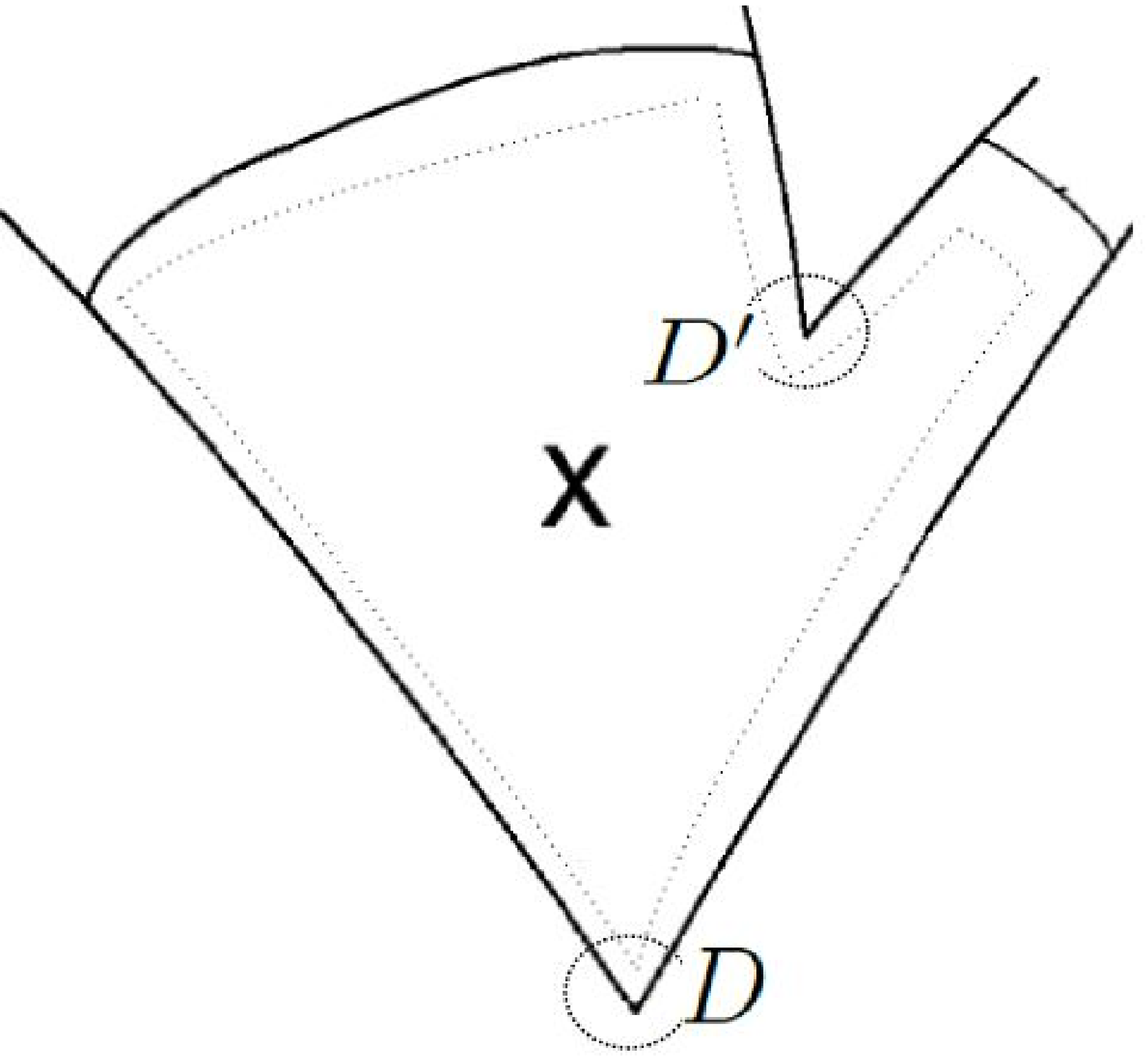}
\includegraphics[width=0.5\textwidth]{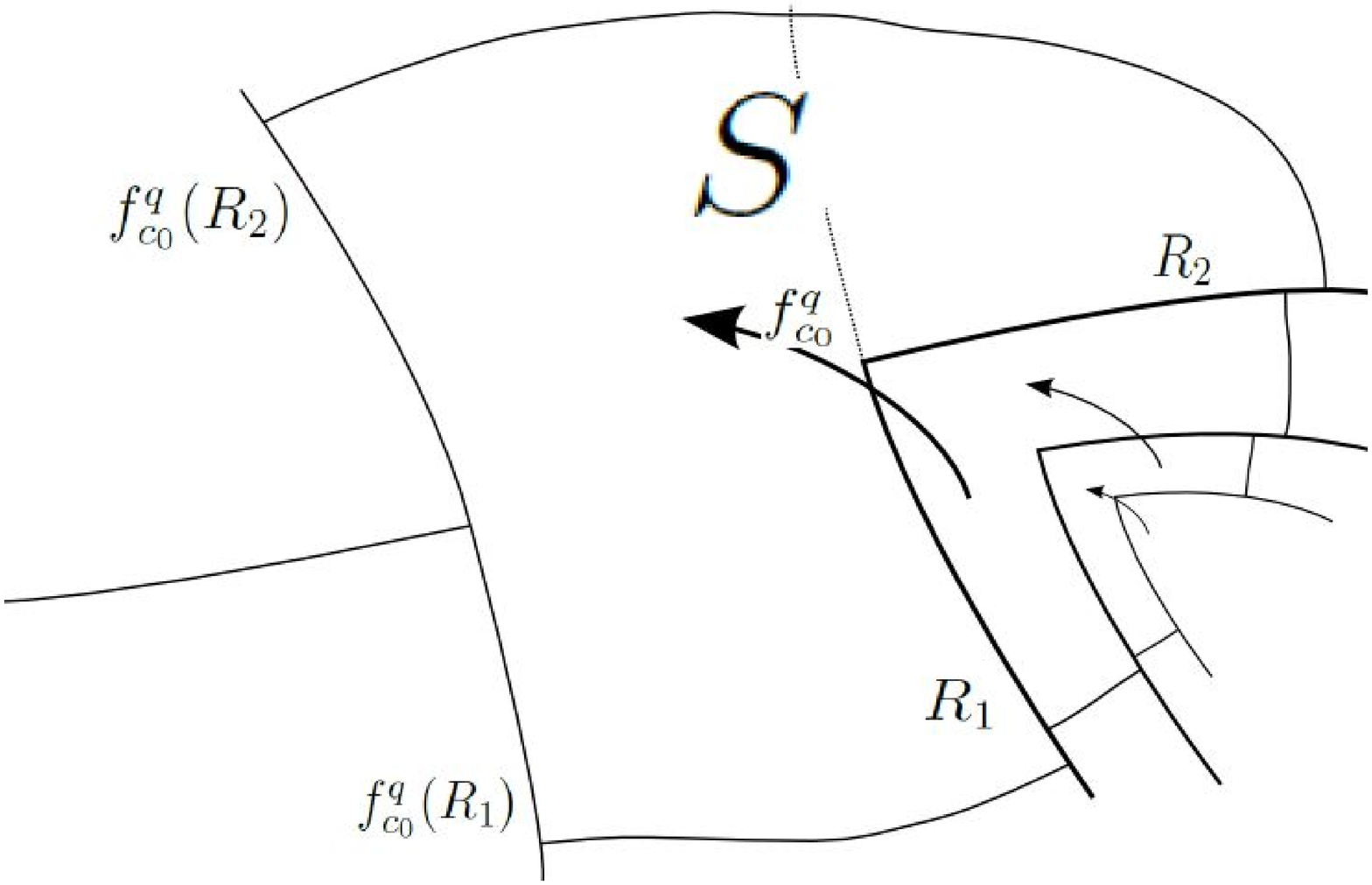}
  \caption{Illustration of the proof of Lemma~\ref{lem4}. There
are two vertices in $\partial X$; let $D$ and $D'$ be $\eta$-disks
around these two vertices. All rays sufficiently close to the
boundary rays of $X$ have their entire limit sets within $D$ or $D'$
(because the vertices of $X$ are repelling periodic and preperiodic
points and have trivial fibers). Therefore, these rays, together
with equipotentials close to those on the boundary of $X$, cover a
definite neighborhood of $\partial X\setminus(D\cup D')$. Right: The
construction of the strip $S$.} \label{Fig:LyubichLemmaPart1}
\end{figure}

The only case left is when $V^{n}$ is very close to one of the two
vertices of $\partial X$. The small Julia set corresponding to $X$
has two fixed points; we call them $\alpha$ and $\beta$ (in analogy
to standard notation) so that $\alpha$ is the dividing fixed point
of the small Julia set. The non-dividing fixed point $\beta$ is the
periodic vertices of $X$; denote the non-periodic vertex by
$\beta'$. Let $q$ be the period of renormalization of $X$. We may
assume that $V$ is very close to $\beta$ (possibly by replacing $V$
with $f^q(V)$).

Denote by $\alpha'\in X$ the non-periodic preimage of $\alpha$ under
$f_{c_0}^q:X\rightarrow f_{c_0}^q(X)$. Let $R_1$ and $R_2$ be the
two rays that land at $\alpha'$ that separate $\beta$ from all other
rays landing at $\alpha'$ (if any). These rays have the following
two properties:
   \begin{itemize}
\item the rays $R_1$, $ R_2$ separate $\{\beta\}$ from $\{\beta',\alpha,c_0\}$;

\item

$f_{c_0}^q(R_1)$, $f_{c_0}^q(R_2)$ land at $\alpha$ and separate
$\{\beta, \alpha' \}$ from $\{\beta', c_0\}$.
\end{itemize}

Let $S\subset X$ be the strip bounded by the two ray pairs $R_1$ and
$R_2$ as well as $f_{c_0}^q(R_1)$ and $f_{c_0}^q(R_2)$. Then there
exists a $k>0$ so that $f_{c_0}^{qk}(V^{n})\subset S$; if we assume
$k$ to be minimal with that property, then $f_{c_0}^{ql}(V^{n})$ is
contained in the same component of $X\sm S$ as $\beta$
 for all $l=0,1,\dots,k-1$. (Note that the only part of this construction that depends on $n$ is $k$.)

Let $S'$ be $S$ truncated at some equipotential, say at the same
height as $V$; we have $f_{c_0}^{qk}(V^{n})\subset S'$. There is an
annulus $A\subset X$ for which $S'$ is the bounded complementary
component, and $c_0$ is contained in the unbounded complementary
component; it can be chosen so that $A$ has a conformal preimage
under $f_{c_0}^{q}$ that is separated from $c_0$ by the ray pair
$f_{c_0}^q(R_1)$, $f_{c_0}^q(R_2)$. Note again that $A$ does not
depend on $n$.

Pulling back this annulus under $f_{c_0}^{qk}$ along the orbit of
$V^{n}$, we obtain the annulus around $V^{n}$ with the same modulus.
\end{proof}

\subsection{The Unsimple Case}
\label{Sub:Unsimple} Let us fix  $X$, $V$, and $\VV$ as in
Lemma~\ref{lem4}. By $X'$ we denote the pullback of $X$ under
$f_{c_0}$, so that $f_{c_0}:X'\rightarrow X$ is two to one. We will
work in $X'\ni 0$ so that the critical value is not in the way of
further pull-backs.

Let $V^{k}\subset X'$ be a maximal pullback of $V$. Then
$X'\backslash V^{k}$ is an annulus, and its boundary moves
holomorphically whenever $c\in \intr  \VV$. By the $\lambda$-lemma
we have a holomorphic motion $h^k_c$ of the annulus $X'\backslash
V^{k}$ with the quasiconformal dilatation depending on the distance
of $c$ to $\partial\VV$.

\begin{proposition}[Parameter Annuli in Unsimple Case]
\label{prop3} For the parameter $c_0$, let $Z^n_i$ be a puzzle piece
corresponding to an unsimple decoration and let $Z'^m_j$ be the
secondary puzzle piece with $Z'^m_j\supset Z^n_i$. Let $V\Subset X$
be as in Lemma~\ref{lem4}. If $Z'^m_j\Subset V$, then
$\ZZ'^m_j\Subset\VV$, and there are

\begin{itemize}
\item
an open Jordan disk $\JJ$ containing $\ZZ'^m_j$ and $c_0$
\item
a domain $V^k$
\item
and, for every $c\in\JJ$, an annulus $A^n_c$ the boundary of which
depends holomorphically on $c$
\end{itemize}
so that  $A^n_c\subset Z'^m_j$ and $Z^n_i$ is contained in the
bounded complementary component of $A^n_c$ whenever $Z^n_i$ exists,
and $f_c^{m+n'}\colon A^n_c\to X'\sm V^k$ is a conformal isomorphism
for some $n'\ge 0$.

For all $c\in\JJ$, there is a holomorphic motion from the closed
annulus $\ovl{X'\sm V^k}(c_0)$ to the closed annulus $\ovl{X'\sm
V^k}(c)$, and its dilatation is bounded in terms of the conformal
distance from $c$ to $\partial\VV$. This holomorphic motion can be
pulled back conformally, with the same dilatation, to a holomorphic
motion from $A^n_{c_0}$ to $A^n_c$.

For all $c\in\JJ$, the annuli $A^n_c$ are bounded by dynamic rays at
the same angles, and by equipotentials at equal heights. The
corresponding parameter rays at the same angles, and equipotentials
at the same heights, bound an annulus $\AA^n$ in parameter space.
Its modulus is bounded below by the distance of $\AA^n$ to
$\partial\VV$.
\end{proposition}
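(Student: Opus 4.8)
The plan is to mimic the structure of the simple case (Proposition~\ref{prop2} and Corollary~\ref{Cor:ParaAnnuli}) but using the annulus $X'\sm V^k$ supplied by Lemma~\ref{lem4} in place of the fundamental annulus $\widehat{Z^0_0}\sm Z^0_0$. First I would unwind the ``unsimple'' hypothesis: by definition there is a $k_0\in\{1,\dots,n\}$ with $f_{c_0}^{k_0}\left(\lfloor\widehat{Z^n_i}\rfloor\right)\subset\lfloor Z'^m_j\rfloor$; combined with Proposition~\ref{prop1.0} this should let me locate $Z^n_i$ as an iterated preimage of $V$ (after passing through the renormalization dynamics associated with $X$), so that there exist $n'\ge 0$ and a branch of $(f_c^{m+n'})^{-1}$ pulling $X'\sm V^k$ back to an annulus $A^n_{c_0}$ with $Z^n_i$ in its bounded complementary component and $A^n_{c_0}\subset Z'^m_j$. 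The modulus lower bound $\operatorname{mod}(A^n_{c_0})=\operatorname{mod}(X'\sm V^k)\ge\operatorname{mod}(X\sm V^k)\ge\eps_0$ is then immediate from conformal invariance of the modulus and the last bullet of Lemma~\ref{lem4}; the crucial point, as in the simple case, is that this bound is uniform in $n,i$ because it comes from the fixed pair $(X,V)$.

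Next I would construct the Jordan disk $\JJ$ and establish holomorphic dependence. The annulus $X'\sm V^k$ moves holomorphically for $c\in\intr\VV$ (this is the motion $h^k_c$ introduced just before the Proposition), with dilatation controlled by the distance of $c$ to $\partial\VV$. To pull this motion back conformally along the orbit $A^n_c,\dots,f_c^{m+n'}(A^n_c)=X'\sm V^k$, I must delete from $\intr\YY'^0_1$ (or from a suitable neighborhood) those parameter values where the critical value obstructs one of the intermediate pull-backs — exactly the parapuzzle pieces $\lfloor\ZZ'^{\,m_k}_{j_k}\rfloor$ or $\lfloor\ZZ^{\,*}_{*}\rfloor$ indexed by the iterates $f_{c_0}^t(A^n_{c_0})$ that land back inside $Z'^0_0$ or $\lfloor Y'^{p'}_1\rfloor$. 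Since the decoration is unsimple with $Z'^m_j\Subset V$, the piece $\ZZ'^m_j$ is a secondary parapuzzle piece distinct from all the deleted ones (the $\ZZ'^{m'}_{j'}$ are pairwise disjoint), so $\ZZ'^m_j\subset\JJ$; and since $Z'^m_j\Subset V\ni c_0$, also $c_0\in\ZZ'^m_j\subset\VV$, giving $\ZZ'^m_j\Subset\VV$ and $c_0\in\JJ$. For $c\in\JJ$ each pull-back step is unobstructed, so $A^n_c$ exists, moves holomorphically, and the dilatation equals that of $h^k_c$, hence is bounded by the distance of $c$ (equivalently of $\AA^n$) to $\partial\VV$ — not to $\partial\JJ$.

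Finally I would pass to parameter space. When $c\in\ZZ^n_i$ the critical value lies in $Z^n_i$, hence in the bounded complementary component of $A^n_c$; but $\ZZ'^m_j\subset\JJ$ contains parameters (near $\partial\VV$, or simply because $Z'^m_j$ is proper in $V$) where $c$ escapes $Z'^m_j\supset A^n_c$. Therefore the dynamic rays and equipotentials bounding the two boundary components of $A^n_c$ have counterparts at the same angles and heights in parameter space, and these bound an annulus $\AA^n$ with $\ZZ^n_i$ in its bounded complement, $\AA^n\subset\ZZ'^m_j$. The Koebe/quasiconformal comparison (as in \cite{ALS}, \cite{L2}, used already in Corollary~\ref{Cor:ParaAnnuli}) then gives $\operatorname{mod}(\AA^n)\ge\eps_0/K$, where $K$ depends only on the distance of $\AA^n$ to $\partial\VV$.

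The main obstacle is the bookkeeping in the second step: correctly identifying the finite orbit of intermediate puzzle pieces under $f_c$ from $A^n_{c_0}$ up to $X'\sm V^k$, matching each to the parapuzzle piece that must be removed to guarantee an unobstructed pull-back, and verifying that none of these coincides with $\ZZ'^m_j$ (so that $c_0$ and $\ZZ'^m_j$ survive in $\JJ$). This is where the unsimple hypothesis $Z'^m_j\Subset V$ and the second bullet of Lemma~\ref{lem4} (every preimage of $X$ meets $Z'^0_0$) do the real work, ensuring the escaping-time structure relative to $f_{c_0}^q\colon X'\to X$ is compatible with the one relative to $f_{c_0}^{p'}\colon \lfloor Y'^{p'}_1\rfloor\to Y'_1$; everything else is a routine application of the $\lambda$-lemma and conformal pull-back.
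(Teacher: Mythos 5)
Your overall strategy matches the paper's: use the annulus $X'\sm V^k$ from Lemma~\ref{lem4}, pull it back conformally to an annulus $A^n_c$ surrounding $Z^n_i$, establish a holomorphic motion with dilatation controlled by distance to $\partial\VV$, and transfer to parameter space. But the execution has genuine gaps.

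First, the modulus chain $\operatorname{mod}(A^n_{c_0})=\operatorname{mod}(X'\sm V^k)\ge\operatorname{mod}(X\sm V^k)\ge\eps_0$ is wrong. Since $V^k\subset X'$ (not $X$), the middle expression $X\sm V^k$ is not the right object; and even granting $X'\subset X$, the inclusion $X'\sm V^k\subset X\sm V^k$ would give the \emph{opposite} modulus inequality. The paper obtains only $\operatorname{mod}(X'\sm V^k)\ge\eps_0/2$, the factor $1/2$ coming from the degree-two map $f_{c_0}\colon X'\to X$. Second, your inference ``since $Z'^m_j\Subset V\ni c_0$, also $c_0\in\ZZ'^m_j$'' conflates dynamical and parameter planes and is in fact false: $c_0$ is infinitely renormalizable with respect to $\MM'_s$, so its critical orbit never enters $Z'^0_0$ and hence $c_0\notin Z'^m_j$, equivalently $c_0\notin\ZZ'^m_j$. (What you need, and what is true, is just $c_0\in\intr\VV$ and $c_0\notin\lfloor\ZZ'^q_t\rfloor$ for any $q,t$, which gives $c_0\in\JJ$.)

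Third, and most importantly, the heart of the proposition — why the branch of $(f_c^{m+n'})^{-1}$ carrying $X'\sm V^k$ to $A^n_c$ is conformal, and why $A^n_c\subset Z'^m_j$ — is exactly the step you defer to ``bookkeeping.'' The paper's proof handles this in two distinct stages that you do not reproduce: (a) choose $n'$ \emph{minimal} so that $f_{c_0}^{m+n'}(Z^n_i)$ meets $\intr X'$; minimality guarantees the orbit avoids $X'\ni 0$ for the first $n'$ pull-backs, so $f_{c_0}^{n'}\colon X''\to X'$ is conformal; (b) the second bullet of Lemma~\ref{lem4} forces $X''\subset Z'^0_0$, and the remaining $m$ pull-backs are then done through the conformal isomorphism $f_c^m\colon Z'^m_j\to Z'^0_0$, which exists precisely for $c\in\JJ:=\intr\VV\setminus\bigcup_{q<m}\bigcup_t\lfloor\ZZ'^q_t\rfloor$. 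Your proposed $\JJ$ (remove parapuzzle pieces indexed by the orbit of $A^n_{c_0}$, in analogy with the simple case) is a plausible alternative, but you have not verified that $\ZZ'^m_j$ survives the deletion, nor that every pull-back step is unobstructed — and these are precisely the points where the unsimple hypothesis and Lemma~\ref{lem4} must be invoked concretely rather than gestured at.
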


\begin{proof}
   Let us consider $$\JJ=\intr\VV\backslash \bigcup_{q<m}\bigcup_t\lfloor\ZZ'^{q}_t\rfloor \;.$$
   It is clear that $\ZZ'^m_j \subset \JJ$ (because all $Z'^m_j$ are maximal) and $\JJ$ is a Jordan disc.
We have $Z'^m_j\subset V\subset X$ for $c_0$ and thus for $c\in\JJ$
by construction, hence $\ZZ'^m_j\Subset\VV$.

We have $f^s_{c_0}\left(Z^n_i\right)\subset Z'^{m-s}_{j(s)}$ for
$s\le m$. By maximality of $Z'^m_j$, we have $f^s_{c_0}(Z'^m_j)\cap
Z'^m_j=\emptyset$ for $s\le m$; but by definition of ``unsimple'',
there is an $n''\le n-m$ such that
\begin{equation}
\label{eq3.1} f_{c_0}^{m+n''}\left(Z^n_i\right)\subset
   Z'^m_j \subset V\subset X\;.
\end{equation}
Therefore, $f^{m+n''-1}_{c_0}(Z^n_i)\subset X'$; let $n'$ be minimal
so that $f_{c_0}^{m+n'}\left(Z^n_i\right)$ has non-empty
intersection with $\intr X'$. Hence there exists a maximal pull-back
$V^k$ of $V$ so that
\begin{equation}
f_{c_0}^{m+n'}\left(Z^n_i\right) \subset V^k\subset X'
\label{Eq:MaxPullbackVk}
\end{equation}
(in fact, $k\le n''-n'+1$: the pull-back $V^{n''-n'+1}$ always
satisfies (~\ref{Eq:MaxPullbackVk}), and the maximal pull-back may
have smaller value of $k$).

We will now construct open annuli $A^n_c$ for all $c\in\JJ$ so that
$f_c^{m+n'}\colon A^n_c\to (\intr X')\sm V^k$ is a conformal
isomorphism. We will describe the construction for $c_0$ explicitly,
but the rays and equipotentials that define these annuli exist for
all $c\in\JJ$.

\begin{figure}
  \includegraphics[width=8cm]{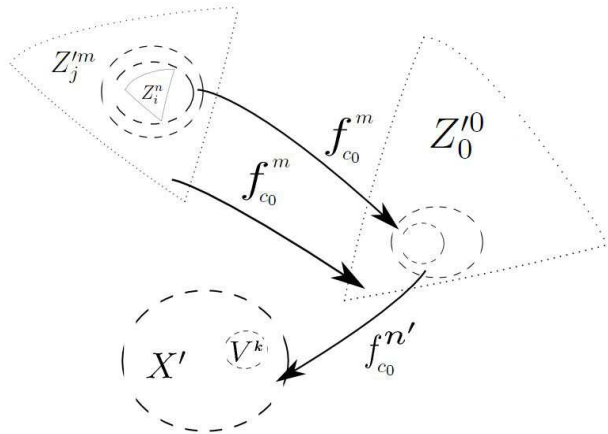}
  \caption{
  Construction of the annulus $A^n_{c_0}$. }
\end{figure}

For the parameter $c_0$, there exists a domain $X''\supset
f^m_{c_0}(Z^n_i)$ so that $f^{n'}_{c_0}\colon X''\to X'$ is a
conformal isomorphism, by minimality of $n'$. This domain is bounded
by certain dynamic rays and equipotentials, and an analogous domain
$X''$ thus exists for all $c\in\intr\XX$. Similarly, a domain
$V''\subset X''$ with $f_c^{n'}\colon V''\to V^k$ exists for all
$c\in\intr\VV$ (because $V^k$ exists for $c\in\VV$). (Note that for
some $c\in\intr\VV$, the puzzle piece $Z^n_i$ may not exist; but if
it does, then $f^m_c(Z^n_i)\subset V''$ because this is so for
$c_0$, by (~\ref{Eq:MaxPullbackVk})).

Now we have annuli $\intr(X'')\sm V''$, and we want to pull them
back $m$ more iterations. This will work for all $c\in\JJ$. Indeed,
for these $c$, the set $Z'^m_j$ exists, and $f^m_c(Z'^m_j)=Z'^0_0$.
For the parameter $c_0$, the puzzle piece
$Z'^0_0=f^m_{c_0}(Z'^m_j)\supset f^m_{c_0}(Z^n_i)$ intersects $X''$.
The way $X$ was constructed, it follows that $X''\subset Z'^0_0$
(this is the second condition in Lemma~\ref{lem4}). For all
$c\in\intr\XX$, the combinatorics of the boundaries of $X''$ and of
$Z'^0_0$ are the same, so these properties remain true for all
$c\in\intr\XX$. For every $c\in\JJ$, we have a conformal isomorphism
$f^m_c\colon  Z'^m_j\to  Z'^0_0$, and this yields an open annulus
$A^n_c\subset Z'^m_j$ so that $f^{m+n'}_c\colon A^n_c\to
\intr(X')\sm V^k$ is a conformal isomorphism.

We have $f^{m+n'}_{c_0}(Z^n_i)\subset V^k$, so $Z^n_i$ is contained
in the bounded complementary component of $A^n_{c_0}$. This property
persists for all parameters $c\in\JJ$ for which $Z^n_i$ exists.

The outer boundary of $X'$ consists of pieces of eight dynamic rays
and four equipotentials, and the same is true for the inner
boundary, which is $\partial V^k$. The boundary thus depends
holomorphically on $c$. As before, by the $\lambda$-lemma this
yields a holomorphic motion from $X'(c_0)\sm V^k(c_0)$ to $X'(c)\sm
V^k(c)$ the dilatation of which is bounded above by the distance of
$c$ to $\partial\VV$. Since all pull-backs were conformal, we obtain
a holomorphic motion from $A^n_{c_0}$ to $A^n_c$ the dilatation of
which is bounded again by the conformal distance of $c$ to
$\partial\VV$. Note that this is independent of $m$ and thus of $n$
(even though $\JJ$ depends on $m$).

Recall from Lemma~\ref{lem4} that the modulus of $X'\sm V^k$, and
thus of $A^n_{c_0}$, is bounded below by some constant $\eps_0/2$
that depends only on $X$ and $V$, and thus on $c_0$ alone but not on
$n$, $m$, or $k$.

As before, there is thus an annulus $\AA^n:=\{c\in\C\colon c\in
A^n_c\}$ in parameter space. The modulus of $\AA^n$ depends on $c_0$
and on the conformal distance from $\AA^n$ to $\partial\VV$.

This concludes the proof.
\end{proof}

\begin{cor}
For large $n$, the moduli of $\ZZ'^m_j\sm \ZZ^n_i$ are bounded below
by a constant that depends only on $c_0$.

More precisely, if $\ZZ'^m_j\Subset \VV$, then $\ZZ'^m_j\setminus
\ZZ^n_i\supset\AA^n$, and hence the modulus of the annulus
$\ZZ'^m_j\setminus \ZZ^n_i$ is bounded below in terms of its
conformal distance to $\partial\VV$.
\end{cor}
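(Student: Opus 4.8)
The claim follows by reading Proposition~\ref{prop3} in the parameter plane and invoking the parameter–dynamical correspondences of Lemma~\ref{lem1}; the only thing to add is that the hypothesis $\ZZ'^m_j\Subset\VV$ holds automatically, with controlled constants, once $n$ is large.

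\emph{The ``more precisely'' part.} Fix an unsimple decoration $Z^n_i$ with $\ZZ'^m_j\Subset\VV$, and let $\JJ\ni c_0$, $A^n_c$ and $\AA^n=\{c\in\JJ\colon c\in A^n_c\}$ be as in Proposition~\ref{prop3}. Take any $c\in\AA^n$, i.e. $c\in A^n_c$. Since $A^n_c\subset Z'^m_j$ and $Z'^m_j$ exists throughout $\JJ$, we get $c\in Z'^m_j$, hence $c\in\ZZ'^m_j$ by Lemma~\ref{lem1} applied to the secondary family $F'$. On the other hand $Z^n_i$ lies in the bounded complementary component of the annulus $A^n_c$ whenever it exists, so $A^n_c\cap Z^n_i=\emptyset$, hence $c\notin Z^n_i$ and therefore $c\notin\ZZ^n_i$ by Lemma~\ref{lem1} applied to $F$. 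Thus $\AA^n\subset\ZZ'^m_j\sm\ZZ^n_i$. Moreover the inner boundary curve of $A^n_c$ encircles $Z^n_i$ for every $c\in\JJ$ at which $Z^n_i$ exists, so, by the standard construction of a parapuzzle annulus from a holomorphically moving dynamical annulus (exactly as in Lemma~\ref{lem1} and Corollary~\ref{Cor:ParaAnnuli}), $\AA^n$ separates $\ZZ^n_i$ from $\partial\ZZ'^m_j$; i.e. it is an essential sub-annulus of $\ZZ'^m_j\sm\ZZ^n_i$. By monotonicity of the modulus under essential inclusion,
\[
\mod\bigl(\ZZ'^m_j\sm\ZZ^n_i\bigr)\ \ge\ \mod\AA^n ,
\]
and $\mod\AA^n$ is bounded below in terms of the conformal distance of $\AA^n$ to $\partial\VV$ by Proposition~\ref{prop3}. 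As $\AA^n\subset\ZZ'^m_j$, this distance is at least that of $\ZZ'^m_j$ to $\partial\VV$, which is the second assertion.

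\emph{The first part.} It remains to check that for all large $n$ one has $\ZZ'^m_j\Subset\VV$ with $\operatorname{dist}(\ZZ'^m_j,\partial\VV)$ bounded below by a constant depending only on $c_0$ (through the once-and-for-all choice of $V\Subset X$ from Lemma~\ref{lem4}). First, the depths $m=m(n)$ tend to infinity: otherwise only finitely many secondary parapuzzle pieces $\ZZ'^m_j$ would occur, each a compact set avoiding $c_0$, contradicting that the decorations $\LL_\nu\subset\ZZ^n_i\subset\ZZ'^m_j$ accumulate at $c_0$. Next, at the parameter $c_0$ the piece $Z'^m_j\supset\widehat{Z^n_i}\supset Z^n_i$ lies in $\intr Y'^{p'}_1$ (Proposition~\ref{prop1.0}(1)), and large $m$ forces a large escaping time of $Z'^m_j$ under the renormalization of $\MMp_s$; since $c_0$ is infinitely renormalizable, for $m$ large $Z'^m_j$ is then contained in a renormalization domain around $c_0$ of level higher than that of $V$, hence $Z'^m_j\Subset V$ with a collar of modulus bounded below by a constant depending only on $c_0$. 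By Proposition~\ref{prop3} (or directly by Lemma~\ref{lem1}) this transfers to the parameter plane: $\ZZ'^m_j\Subset\VV$, and $\ZZ'^m_j$ is separated from $\partial\VV$ by infinitely many further parameter ray pairs, so $\operatorname{dist}(\ZZ'^m_j,\partial\VV)$ is bounded below by a constant depending only on $c_0$. Combined with the previous paragraph, $\mod(\ZZ'^m_j\sm\ZZ^n_i)$ is bounded below by a constant depending only on $c_0$.

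\emph{Main obstacle.} The delicate point is the last paragraph: one must show that secondary decoration parapuzzle pieces which come close to the infinitely renormalizable parameter $c_0$ are eventually trapped, with a definite collar, inside the fixed renormalization window $\VV$ — equivalently, that approaching $c_0$ combinatorially forces large escaping times and hence deep nesting in the renormalization tower. This is the parameter-plane counterpart of the escaping-time estimates used in Lemma~\ref{lem4} and in the simple case, and it is precisely where infinite renormalizability of $c_0$ (i.e. $c_0\in\intr\VV$ together with the nested ray-pair structure of the renormalization windows) enters. Everything else is the routine transfer between dynamical and parameter annuli through holomorphic motions and Lemma~\ref{lem1}.
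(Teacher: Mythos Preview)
Your argument is correct and follows the same route as the paper: read off from Proposition~\ref{prop3} that $\AA^n$ sits essentially inside $\ZZ'^m_j\setminus\ZZ^n_i$, use monotonicity of moduli, and then invoke that the conformal distance from $\ZZ'^m_j$ (hence $\AA^n$) to $\partial\VV$ stays bounded below as $n\to\infty$. The paper's own proof is a three-line version of your ``more precisely'' paragraph and then simply asserts ``as $n$ tends to $\infty$, this conformal distance is bounded below''; you have expanded this last step by arguing that $m(n)\to\infty$ and that the $\ZZ'^m_j$ accumulate at $c_0$ rather than at $\partial\VV$, which is exactly the intended reasoning (and parallels the argument in the simple-case theorem). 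One small imprecision: your sentence that ``$Z'^m_j$ is then contained in a renormalization domain around $c_0$ of level higher than that of $V$'' is not literally right, since $Z'^m_j$ does not contain $c_0$; what you need (and what your parameter-space argument via accumulation at $c_0$ and separating ray pairs actually gives) is only that $\ZZ'^m_j$ eventually lies well inside $\VV$. Also, the citation of Proposition~\ref{prop1.0}(1) for $Z'^m_j\subset\intr Y'^{p'}_1$ is slightly off, since that item is stated for $Z^n_i$; but the conclusion holds by the analogous construction for the secondary family and is not essential to the argument.
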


\begin{proof}
In Proposition~\ref{prop3}, we proved that $A^n_c\subset Z'^m_j$ and
that $Z^n_i$ is contained in the bounded complementary component of
$A^n_c$. Therefore $\AA^n\subset\ZZ'^m_j$, and $\ZZ^n_i$ is
contained in the bounded complementary component of $\AA^n$.
Therefore $\bmod(\ZZ'^m_j\setminus \ZZ^n_i)\ge \bmod\AA^n$. As $n$
tends to $\infty$, this conformal distance is bounded below, so that
all $\AA^n$ have their moduli bounded below.
\end{proof}

\newpage

\end{document}